
\documentclass[times]{article}

\usepackage{moreverb}

\usepackage{graphicx}
\usepackage{amsmath,amsfonts,amssymb,amsbsy,amsthm, subfigure,mathabx}
\usepackage{tikz,pgfplots}
\usepackage{bm}
\usepackage{comment}
\usepackage[a4paper,body={18cm,26cm},top=2cm]{geometry}

\usepackage[dvips,colorlinks,bookmarksopen,bookmarksnumbered,citecolor=red,urlcolor=red]{hyperref}

\newcommand\BibTeX{{\rmfamily B\kern-.05em \textsc{i\kern-.025em b}\kern-.08em
T\kern-.1667em\lower.7ex\hbox{E}\kern-.125emX}}

\makeatletter
\newif\if@restonecol
\makeatother

\usepackage[algo2e,ruled,lined,titlenumbered,commentsnumbered]{algorithm2e}

\theoremstyle{remark}
\newtheorem*{rem}{Remark} 

\theoremstyle{plain}
\newtheorem{thm}{Theorem}

\newtheorem{cor}{Corollary}

\newcommand{\un}[1]{\ensuremath{\underline{#1}}}
\newcommand{\uu}[1]{\ensuremath{\un{\un{#1}}}}
\newcommand{\dime}{d}

\newcommand{\sig}{\ensuremath{\uu{\sigma}}}
\newcommand{\sigH}{\ensuremath{\sig_H}}

\newcommand\strain[1]{\uu{\varepsilon}\left(#1\right)}
\newcommand{\dep}{\ensuremath{\un{u}}}
\newcommand{\depv}{\ensuremath{\un{v}}}
\newcommand{\depH}{\ensuremath{\dep_H}}

\newcommand{\depvH}{\ensuremath{\depv_H}}

\newcommand{\efdep}{\ensuremath{\mathbf{u}}}
\newcommand{\lam}{\ensuremath{\boldsymbol{\lambda}}}

\newcommand{\eft}{\ensuremath{\mathbf{t}}}

\newcommand\shapef{\varphi}
\newcommand\shapev{\un{\boldsymbol{\shapef}}_H}
\newcommand{\stiff}{\ensuremath{\mathbf{K}}}

\newcommand{\force}{\ensuremath{\mathbf{f}}}

\newcommand{\pa}{\ensuremath{\mathbf{A}}}
\newcommand{\da}{\ensuremath{\mathbf{B}}}

\newcommand{\res}{\ensuremath{\mathbf{r}}}
\newcommand{\bz}{\ensuremath{\mathbf{z}}}

\newcommand{\sigadj}{\ensuremath{\uu{\widetilde{\sigma}}}}
\newcommand{\matid}{\ensuremath{\mathbf{{I}}}}
\newcommand{\depadj}{\ensuremath{\un{\widetilde{u}}}}

\newcommand{\domain}{\ensuremath{\Omega}}
\newcommand{\s}{\ensuremath{^{(s)}}}

\newcommand{\setelem}{\ensuremath{\mathcal{T}}}

\newcommand{\setvertex}{\ensuremath{\mathcal{V}}}
\newcommand\trace{\operatorname{tr}}

\newcommand\hooke{\mathbb{H}}
\newcommand\KA{\ensuremath{\mathrm{KA}}}
\newcommand\KAo{\ensuremath{\KA^0}}
\newcommand\KAoo{\ensuremath{\KA^{00}}}
\newcommand\KAH{\ensuremath{\KA_H}}

\newcommand\SA{\ensuremath{\mathrm{SA}}}

\newcommand\SAt{\ensuremath{\widetilde{\SA}}}
\newcommand\SAtH{\ensuremath{\widetilde{\SA}_H}}

\newcommand{\broken}{\ensuremath{\KA(\bigcup\domain\s)}}

\newcommand\ecr[2]{\ensuremath{{e_{CR_{\ensuremath{#2}}}(\ensuremath{#1})
} } }
\newcommand\ecrc[2]{\ensuremath{{e^2_{CR_{#2}}(#1)}}}
\newcommand\enernorm[2]{\|#1\|_{\hooke^{-1},#2}}

\newcommand{\grad}{\ensuremath{\un{\operatorname{grad}}}}

\bibliographystyle{plain}

\title{Strict lower bounds with separation of sources of error in 
non-overlapping domain decomposition methods}
\author{V.~Rey$^1$, P.~Gosselet$^1$, C.~Rey$^2$ \\
$^1$ LMT-Cachan / Ecole Normale Sup\'erieure de Cachan, CNRS, Universit\'e Paris Saclay\\
61, avenue du pr\'esident Wilson, 94235 Cachan, France,\\[5pt]
$^2$ Safran Tech, rue des Jeunes Bois Ch\^ateaufort CS 80112,78772 Magny les Hameaux, France}
\begin{document}


\maketitle

\begin{abstract}
This article deals with the computation of guaranteed lower bounds of the error in the framework of finite element 
(FE) 
and domain decomposition (DD) methods. In addition to a fully parallel computation, the proposed lower bounds separate 
the algebraic error (due to the use of a DD iterative solver) from the discretization error (due to the FE), which 
enables the steering of the iterative solver by the discretization error. These lower bounds are also used to improve 
the goal-oriented error estimation in a substructured context. Assessments on 2D static linear mechanic problems 
illustrate the relevance of the separation of sources of error and the lower bounds' independence from the 
substructuring. We also steer the iterative solver by an objective of precision on a quantity of 
interest. This strategy consists in {a sequence} of {solvings} and takes advantage of 
adaptive
remeshing and recycling of search directions.

\noindent  \textbf{Keywords:}{Verification; Error estimation; Finite element method; Domain decomposition methods; FETI; BDD}
\end{abstract}


\vspace{-6pt}

\section{Introduction}
\vspace{-2pt}Virtual testing is a useful tool for engineers to certify structures without resorting to experimental tests. 
However, its massive adaption comes with several challenges. Among others, virtual 
testing requires the capability to solve large problems 
(several millions degrees of freedom) and to warrant the 
quality of the results provided by simulations. To tackle these difficulties, we propose to use domain 
decomposition methods and verification. On the one hand, non-overlapping domain 
decomposition methods \cite{Gos06,Let94,Far94} are well-known techniques that 
enable the {solving} of large 
mechanical problems by exploiting parallel computers' performance. On the other hand, verification provides tools to estimate the distance between the unknown exact solution 
and the computed approximated solution. This distance is the approximation error, it can be estimated by a global energy norm or by local quantities of interest (goal-oriented error estimation). 

This paper is the continuation of papers connecting domain decomposition methods and verification. In \cite{Par10}, the 
authors proposed a parallel error estimator based on the error in constitutive relation \cite{Lad83} in a substructured 
framework. They described a methodology to construct the required admissible fields for error estimation. Those fields 
were rebuilt using quantities naturally processed during the {solving} and preconditioning steps of 
classical domain decomposition algorithms as inputs for classical equilibration techniques used in parallel on each 
subdomain. In \cite{Rey14}, a new parallel error estimator that separates the discretization error (due to the finite 
element method) from the algebraic error (due to the iterative solver) was proposed. This new estimator enables the 
definition of a new 
stopping criterion for the iterative solver, no 
longer defined regardless the discretization, which avoids 
{over-solving}. Finally, in \cite{Rey15}, this work was extended to
goal-oriented error estimation. The exact value of a linear quantity of 
interest defined by an extractor \cite{Par97,Str00,Ohn01} was estimated using 
global error 
estimation of a forward problem and an adjoint problem. It was shown that these 
two 
problems could be solved simultaneously thanks to a block-Krylov algorithm 
\cite{Saa00} steered by an objective on the error on the quantity of interest. 

Upper bounds are the main concern of verification and the literature on lower bounds is scarce. By exploiting the residual equation \cite{Pru99} and 
constructing a 
continuous error estimation, a lower bound of the error can be computed. The 
question of the construction of a continuous error estimation has already been 
adressed in many papers (for instance \cite{Par06, Die03,Gal09}) where the authors 
benefit the computation of an upper bound for the computation of a lower bound.

The objective of this paper is to investigate the computation of a lower bound 
of the error in a substructured framework. In line with the papers 
associating domain decomposition methods and verification, the lower bounds 
are computed in parallel and they separate the two sources of error 
(discretization error and algebraic error). Based on the results demonstrated in \cite{Par06}, the computation of the 
lower bound does not involve significant cost since it exploits the fields computed during the reconstruction of a 
statically admissible stress fields.

The paper is organized as follows. In section \ref{sec:settings}, we define 
the reference problem and recall the principle of the error in constitutive 
relation. We also recall the domain decomposition methods' principles and highlight the 
fields built at each iteration. Finally, we recall the parallel error estimators 
developed in \cite{Par09,Rey14} and give a brief state of the art of the 
computation of continuous fields for lower bounds of the error. In section 
\ref{sec:binf}, two theorems providing lower bounds with and without 
separation of sources of error are demonstrated. The parallel reconstruction of 
fields required to compute these bounds is detailed. We also show how to 
benefit from this global information on the error to better the goal-oriented 
error estimation. In section \ref{sec:assessment}, we apply these lower bounds 
on two-dimensional mechanical structures. We compare the lower bounds provided 
by a sequential approach and primal and dual approaches and also study the 
independence with respect to the substructuring. We illustrate the convergence of the lower 
bounds during the iterations and illustrate the separation of sources. Finally, in order to reach 
an objective of precision on a quantity of interest, we apply an auto-adaptive strategy on one of the structures. In 
this strategy, we use the separation of sources of error to define the stopping criterion for the iterative solver. 
Benefiting the informations from a first {solving}, we process adaptive remeshing to better the FE 
solution and 
lower the error bounds and we recycle the search directions generated (Krylov subspace recycling, see \cite{Rey98, 
Gos13, Ris00}) to speed up further {solvings}. Section \ref{sec:ccl} concludes the paper.

\vspace{-6pt}

\section{Settings}\label{sec:settings}
\vspace{-2pt}
\subsection{Reference problem}
Let $\mathbb{R}^\dime$ represents the physical space. 
Let us consider the static equilibrium  of a (polyhedral) structure which occupies the open domain 
$\domain\subset\mathbb{R}^\dime$ and which is subjected to  given body force 
{$\un{f}\in\mathtt{L}^2(\Omega)$} within $\Omega$,  to given
traction force {$\un{g}\in\mathtt{L}^2(\partial_g\Omega)$} on $\partial_g\Omega$  and to given 
displacement field $\dep_d$ on the  complementary part of 
the boundary (such that $\operatorname{meas}(\partial_u\Omega)\neq0$). We  assume that the structure undergoes  small  
 perturbations  and  that  the  material   is  linear  elastic, characterized by Hooke's  elasticity tensor $\hooke$.  
Let $\dep$ be  the unknown displacement field, $\strain{\dep}$ the symmetric part  of the gradient of $\dep$, $\sig$ 
the Cauchy stress tensor. Let $\omega$ be an open subset of $\domain$.

We introduce two affine subspaces and one positive form:
\begin{itemize}
\item Affine subspace of kinematic admissible fields (KA-fields)
\begin{equation}\label{eq:KA}
  \KA(\omega)=\left\{ \dep\in \left(\mathtt{H}^1(\omega)\right)^\dime,\ \dep = \dep_d \text{ on }\partial\omega\bigcap\partial_u\domain \right\}
\end{equation}
and we note $\KAo(\omega)$ the following linear subspace:
\begin{equation}\label{eq:KAo}
  \KAo(\omega)=\left\{ \dep\in \left(\mathtt{H}^1(\omega)\right)^\dime,\ \dep = 
0 \text{ on }\partial\omega\bigcap\partial_u\domain \right\}
\end{equation}
and $\KAoo(\omega)$ the following linear subspace:
\begin{equation}\label{eq:KAoo}
\KAoo(\omega)=\left\{ \dep\in \left(\mathtt{H}^1(\omega)\right)^\dime,\ \dep 
= 0 \text{ on }\partial\omega\setminus\partial_g\domain \right\}
\end{equation}

\begin{rem}
 Note that if $\omega=\Omega$, $\KAoo(\omega)$ and $\KAo(\omega)$ are 
identical. 
\end{rem}

\item Affine subspace of statically admissible fields (SA-fields)
\begin{multline}\label{eq:SA}
  \SA(\omega)
  =\Bigg\lbrace   \uu{\tau}\in  \left(\mathtt{L}^2(\omega)\right)^{\dime\times \dime}_{\text{sym}}; \
    \forall  \depv \in  \KAoo(\omega),\ \\ \int\limits_\omega
  \uu{\tau}:\strain{\depv}    d\domain    =    \int\limits_\omega   \un{f} \cdot\depv    d\domain +
  \int\limits_{\partial\omega\bigcap\partial_g\domain} \un{g}\cdot\depv dS   \Bigg\rbrace
\end{multline}

\item Error in constitutive relation \cite{Lad83}
\begin{equation}\label{eq:ecr}
  \ecr{\dep,\sig}{\omega}= \enernorm{\sig-\hooke:\strain{\dep}}{\omega}
\end{equation}
where ${\enernorm{\uu{x}}{\omega}}=\displaystyle \sqrt{\int_\omega \left( \uu{x}: {\hooke}^{-1}:\uu{x} \right)d\domain}$
\end{itemize}

The mechanical problem set on $\domain$ can be formulated as:
\begin{equation}\label{eq:refpb}
  \text{Find } \left(\dep_{ex},\sig_{ex}\right)\in\KA(\domain)\times\SA(\domain) \text{ such  that } \ecr{\dep_{ex},\sig_{ex}}{\domain}=0
\end{equation}
The solution to this problem, named ``exact'' solution, exists and is unique.

\begin{rem}
 The formulation \eqref{eq:refpb} is equivalent to the classical 
following formulation:
 
\begin{equation}
 \text{Find }\dep \in \KA(\domain) \text{ such that } \forall  \depv \in  
\KAoo(\Omega) \text{,   }
a(\dep,\depv)=L(\depv)
\end{equation}
with
\begin{equation}\label{eq:def_a_bil}
  a(\dep,\depv)=\int\limits_\Omega \strain{\dep}:\hooke:\strain{\depv}    
d\domain  
\end{equation}
and
\begin{equation}
  L(\depv)= \int\limits_\domain   \un{f} \cdot\depv    d\domain +
  \int\limits_{\partial_g\domain\bigcap\partial\domain} \un{g}\cdot\depv dS
\end{equation}

\end{rem}

\subsubsection{Finite element approximation}
Let us consider a mesh  of ${\domain}$ to which we  associate the 
finite-dimensional  subspace  $\KAH(\domain)$  of $\KA(\domain)$.  
$\setelem$ is the set of elements of the mesh and $\setvertex$ is the 
set of vertexes. The  
classical finite element displacement approximation consists in searching:
\begin{equation}
  \begin{aligned}
    \depH&\in\KAH(\domain)\\
    \sigH&=\hooke:\strain{\depH}   \\
    \int_{\domain}
  \sigH:\strain{\depvH}   d\domain  &=   \int_{\domain} \un{f}\cdot\depvH   
d\domain   +
  \int_{\partial_g\domain} \un{g}\cdot\depvH dS,\qquad \forall\depvH\in 
\KAo_H(\domain)
    \end{aligned}
\end{equation}
Of course the approximation is due to the fact that in most cases 
$\sigH\notin\SA(\domain)$.

After introducing the matrix $\shapev$ of  shape functions which form a basis of 
$\KAH(\domain)$ (extended to Dirichlet degrees of freedom)  and the  vector of  
nodal unknowns  $\efdep$ so that $\depH=\shapev \efdep$, the  classical finite  
element method leads  to the linear
system:
\begin{equation}\label{eq:globalFE}
\begin{pmatrix} \stiff_{rr} & \stiff_{rd} \\ \stiff_{dr} & \stiff_{dd}
\end{pmatrix}\begin{pmatrix}\efdep_r\\ \efdep_d\end{pmatrix} = 
\begin{pmatrix}\force_r\\\force_d \end{pmatrix} +\begin{pmatrix} 0\\\lam_d 
\end{pmatrix}
\end{equation}
where $\stiff$ is  the (symmetric {semi positive definite}) stiffness  matrix
and $\force$ is the vector of generalized forces; Subscript $d$ stands for 
Dirichlet degrees of freedom (where displacements are prescribed) and Subscript 
$r$ represents the remaining degrees of freedom so that unknowns are $\efdep_r$ 
and $\lam_d$ where Vector $\lam_d$ represents the nodal reactions:
\begin{equation}\label{eq:lambdad}
\lam_d^T=
    \int_{\domain}
  \sigH:\strain{{\shapev}_d}   d\domain - \int_{\domain} \un{f}\cdot{\shapev}_d  
d\domain -
  \int_{\partial_g\domain} \un{g}\cdot{\shapev}_d dS
\end{equation}
where ${\shapev}_d$ is the matrix of shape functions restricted to the Dirichlet 
nodes and $\un{n}$ the outer normal vector.

\subsubsection{A posteriori error estimation}
\paragraph{Upper bound of the discretization error}
The estimator we choose is based on the error in constitutive relation, which 
gives a guaranteed estimator for the discretization error.

The fundamental relation is the following (Prager-Synge theorem, see for 
instance \cite{Lad04}):
\begin{equation}\label{eq:erdc}
\begin{split}
&\forall (\hat{\dep},\hat{\sig})\in\KA(\Omega)\times\SA(\Omega),\\ 
&\left\|\strain{\dep_{ex}}-\strain{\hat{\dep}}\right\|_{\hooke,\Omega}^2 + 
\left\|\sig_{ex}-\hat{\sig}\right\|_{\hooke^{-1},\Omega}^2 =  
\ecrc{\hat{\dep},\hat{\sig}}{\Omega}
\end{split}
\end{equation}
We note $\vvvert \depv 
\vvvert_\domain=\left\|\strain{\depv}\right\|_{\hooke,\Omega} $ the energy norm 
of the displacement, and since we can choose $\hat{\dep}=\depH\in\KA(\domain)$, we retain the following upper bound for the error $\un{{e}}_{discr}=\dep_{ex}-\dep_{{H}}$:
\begin{equation}\label{eq:erdc2}
e_{discr}:= \vvvert \un{{e}}_{discr} \vvvert_{\Omega} \leqslant  
\ecr{{\dep}_H,\hat{\sig}}{\Omega}
\end{equation}

 The construction of $\hat{\sig}\in\SA(\domain)$ is a complex problem solved by 
various approaches \cite{Lad83,Par06,Ple11, Rey14bis}. 

The techniques \cite{Lad83, Ple11, Rey14bis} are two-steps procedures. The 
first 
step consist in building a set of equilibrated tractions or works along the 
edges of the elements of the mesh. Each method proposes its own strategy to 
reconstruct such tractions. The second step, common to all methods, is the 
{solving } of Neumann problems on each element using the 
equilibrated tractions as Neumann conditions. 

The technique developed in \cite{Par06} does not require
equilibrated fluxes but only the {solving} local problems on star-patches (a star patch, denoted by 
$\omega_i$, is 
composed of the elements 
sharing the vertex $i$ and corresponds to the support of the shape function 
associated to the vertex $i$). This 
is the reason why the technique is sometimes called the flux-free technique. 

Local problems (on element or star-patch) are usually solved on a space of 
finite dimension which is richer than the finite element space restrained to 
the support of the local problem. The space is enriched either thanks to 
higher degree polynomial shape functions or thanks to mesh refinement (each 
element being divided into several smaller elements).

\paragraph{Lower bound of the discretization error}\label{par:borne_inf_seq}
A lower bound of the true error can be obtained using Cauchy-Schwarz inequality 
in the following residual equation: 
\begin{equation}\label{eq:def_residu_binf}
\begin{aligned}
 \forall \un{w} \in  \KAo(\Omega)\\
\int_{\domain}
  \strain{\dep_{ex}-\depH}:\hooke:\strain{\un{w}}   d\domain&= 
\int\limits_\Omega   \un{f} \cdot\un{w}    d\domain +
  \int\limits_{\partial_g\domain\bigcap\partial\Omega} \un{g}\cdot\un{w} dS  - 
 \int_{\domain}\sigH:\strain{\un{w}}   d\domain \\
 & := R_H(\un{w})
 \end{aligned}
\end{equation}

Therefore, every displacement field $\un{w} \in 
\KAo(\Omega)\setminus\{\un{0}\}$ can 
be used to obtain the following strict lower bound \cite{Pru99}:
\begin{equation}\label{eq:binf}
\vvvert \dep_{ex} - \dep_H \vvvert_{\Omega} \geq \frac 
{|R_H(\un{w})|}{\vvvert\un{w}\vvvert_{\Omega}}
\end{equation}

The accuracy of the lower bound depends on the quality of the continuous field 
$\un{w}$, which is often called continuous error estimate. Indeed, the lower bound equals the true error for the 
continuous field $\un{w}=\dep_{ex} - \dep_H $. 
The construction of $\un{w}$ was mainly studied in \cite{Die03, Par06, Gal09} of which we recall the main results.

In \cite{Die03}, the error is estimated thanks to an implicit residual-based 
error estimator with local {solvings} on elements. A continuous field $\un{w} 
\in 
\KAo(\Omega)\setminus\{\un{0}\}$ is constructed from the element estimators by averaging on the edges.

 In \cite{Par06}, local problems on star-patches are solved to compute an upper 
bound of the error : 
\begin{equation}
\begin{aligned}
  \text{Find }\underline{e}^i \in \KAo(\omega_i) \text{ such that } \forall 
 \depv \in  \KAo(\omega_i)\\
 a(\underline{e}^i, \depv)=R_H(\varphi_H^i \depv)
 \end{aligned}
\end{equation}
where $\varphi_H^i$ is the shape function associated to the central node i of 
the star-patch.
The previous problem is solved on a space $\KAo_h(\omega_i)$ richer than $\KAo_H(\omega_i)$.

The proposed continuous displacement field is:
\begin{equation}
 \un{w}= \Pi_h(\sum_{i\in\setvertex} \varphi_H^i  \underline{e}^i )
\end{equation}
where  $\Pi_h$ is the projector on the space $\KAo_h(\omega_i)$.
Discontinuities between star-patches vanish thanks to the multiplication by 
$\varphi_H^i$.
The projector eases the computation of $R_H(\un{w})$. 

Note that in the same article, an enhanced estimate is proposed to better the 
lower bound : 
\begin{equation}\label{eq:binf_amelioree}
\vvvert \dep_{ex} - \dep_H \vvvert^2_{\Omega} \geq \frac 
{R_H(\un{w})^2}{\vvvert\un{w}\vvvert^2_{\Omega} - 
\vvvert\un{e}_G\vvvert^2_{\Omega}}
\end{equation}
where $\un{e}_G$ is the solution of the following global problem: 
\begin{equation}
\begin{aligned}
  \text{Find }\underline{e}_G \in \KAo_H(\Omega) \text{ such that } 
\forall \depv \in  \KAo_h(\omega_i)\\
 a(\underline{e}_G, \depv)=- a(\un{w}, \depv)
 \end{aligned}
\end{equation}

In  \cite{Gal09}, the statically admissible stress field is built from a 
displacement field which is the sum of the solutions of local problems on 
star-patches with homogeneous boundary Dirichlet conditions: 
\begin{equation}
\begin{aligned}
  \text{Find }\un{w}^i  \in\KA^{0,\omega_i}(\omega_i) \text{ such that }   \forall \depv 
\in\KA^{0,\omega_i}(\omega_i)\\
\int_{\omega_i}\strain{\un{w}^i}:\hooke :  \strain{\depv} d\Omega= 
\int_{\omega_i}  
(\varphi_i \un{f} -\sig_H \grad(\varphi_i))\depv d\Omega - \int_{\omega_i}  
\varphi_i 
\sig_H: \strain{\depv} d\Omega
 \end{aligned}
\end{equation}
where $\KA^{0,\omega_i}$ is the space of continuous displacement fields that equal to zero on the 
boundary of the star-patch $\omega_i$.
The continuous field $\un{w} \in \KAo(\Omega)$ is the sum of the solutions of 
the previous problem:
\begin{equation}
 \un{w}=\sum_{i\in\setvertex} \un{w}^i
\end{equation}

To conclude this brief review, there exist various techniques to construct $\un{w}$. They always take advantage of the 
field computed during the estimation of an upper bound so that the extra-cost is very limited.

\subsubsection{Substructured formulation}
 Let us consider a decomposition of domain $\Omega$ in $N_{sd}$ regular open subsets $(\Omega^{(s)})_s$ such that 
$\Omega^{(s)}\bigcap \Omega^{(s')}=\emptyset$ for $s\neq s'$ and $\bar{\Omega}=\bigcup_s \bar{\Omega}^{(s)}$. We note $\partial_g\domain\s= \partial\Omega\s \bigcap \partial_g\Omega$ the Neumann border of subdomains.

The mechanical problem on the substructured configuration writes :
\begin{equation}
  \forall s \left\{
   \begin{aligned}
&\dep^{(s)} \in \KA(\Omega^{(s)})  \\
& \sig^{(s)} \in \SA(\Omega^{(s)}) \\
&  \ecr{\dep^{(s)},\sig^{(s)}}{\Omega^{(s)}}=0
\end{aligned}  \right.
\end{equation}

and
\begin{equation}
 {\forall (s,s') \text{ such that } \Omega^{(s)}  \text{ and }  
\Omega^{(s')} \text{ are adjacent }}\left\{
    \begin{aligned}
   & \trace(\dep^{(s)})=\trace(\dep^{(s')}) \text{ on } \Gamma^{(s,s')}\\
 &  \sig^{(s)} \cdot\underline{n}^{(s)} +\sig^{(s')}\cdot \underline{n}^{(s')} =\underline{0} \text{ on } 
\Gamma^{(s,s')}
    \end{aligned} 
  \right.
\end{equation}
The set of fields $\dep$ defined on $\domain$ such that $\dep_{|\domain\s}\in\KA(\Omega^{(s)})$ without interface 
continuity is a broken space which we note \broken.

\subsubsection{Finite element approximation for the substructured problem}
We assume that the mesh of  $\bar{\domain}$ and the substructuring are conforming. This hypothesis implies that each 
element only belongs to one subdomain and nodes are matching on the interfaces. {For each subdomain, 
let us denote by the subscript b the degrees of freedom on the boundary of the subdomain and by the subscript i the 
degrees of fredom inside the subdomain.}

Let  $\eft^{(s)}$ be the discrete trace operator on the interface. 
{$\eft^{(s)}$ enables to cast degrees of freedom from a complete subdomain to its interface. Using an 
adapted ordering, we have
\begin{equation}\label{eq:discrete_trace}
 \eft^{(s)}=\begin{pmatrix} \matid_{bb} & \mathbf{0}_{bi} 
\end{pmatrix}
\end{equation}
Therefore 
${\eft\s}^T$ is the extension by zero operator: it extends data supported by the boundary to the whole subdomain.}

Let us introduce the unknown nodal reaction on the interface $\lam\s$, the equilibrium of each subdomain writes:
\begin{equation}\label{eq:efddeq}
\stiff\s \efdep\s = \force\s + {\eft\s}^T \lam\s
\end{equation}

Let $(\pa\s)$ and $(\da\s)$ be the primal and dual assembly operator. {Those operators are signed boolean operators. 
The dual operator $(\da\s)$ enables to express the continuity of displacements and the primal operator $(\pa\s)$ 
enables to express the mechanical equilibrium of interface. 
Their number of columns is equal to the number of boundary degrees of freedom. $\pa\s$ injects the boundary degrees of freedom of $\Omega\s$ in the global interface.
Thus the number of rows of $\pa\s$ is equal to the number of degrees of freedom on the global interface. 
The number of rows of $\da\s$ is equal to the number of connections between pairs of neighboring degrees of freedom. }

{In the case of two subdomains, we have $\sum_s\da\s \eft\s\efdep\s = \eft^{(1)}\efdep^{(1)} -\eft^{(2)}\efdep^{(2)} $ 
and $\sum_s\pa\s \lam\s~=~\lam^{(1)}+\lam^{(2)} $. For more details on the assembly operators, the reader can refer 
to~\cite{Gos06}.}

The discrete counterpart of the interface 
admissibility equations is:
\begin{equation}\label{eq:efintadmiss}
\left\{\begin{aligned}
\sum_s\da\s \eft\s\efdep\s &=0 \\
\sum_s\pa\s \lam\s &=0 \\
\end{aligned}\right.
\end{equation}
Equations \eqref{eq:efddeq} and \eqref{eq:efintadmiss} form the discrete substructured 
system, which is equivalent to the global problem \eqref{eq:globalFE}.

\subsubsection{Domain decomposition solvers and admissible fields}\label{sec:definitions_champs_iter}
Domain decomposition solvers are well described in many papers (see for 
instance \cite{Gos06} and the associated bibliography). The principle is to condense the global problem on the 
interface to create a smaller problem. In classical algorithms such as BDD \cite{Let94} and FETI \cite{Far94}, this 
new interface problem is solved iteratively thanks to a projected preconditioned conjugate gradient. Each iteration 
implies two parallel {solvings} on subdomains with two dual operators (one for the 
preconditioning step, one for the direct step) so that local problems with Neumann boundary conditions and local 
problems with Dirichlet boundary conditions are alternatively solved. In \cite{Par10} it was proved that the following 
fields could be processed at no extra cost:
\begin{itemize}
\item $(\dep_D\s)_s\in \KA(\domain)$: {displacement field which results from a Dirichlet problem and 
which is 
thus globally admissible}
\item $(\lam_N\s)_s$ :  nodal reactions which are balanced at 
the interface.  
\item $(\dep_N\s)_s\in 
\broken$: { displacement field which results from a Neumann problem and which is not 
globally admissible}
\item $\sig_N\s $: the stress field associated to $\dep_N\s$ ($\sig_N\s = \hooke:\strain{\dep_N\s}$). It 
can be {used} (with 
additional input $\lam_N\s$) to build in parallel stress fields $ \hat{\sig}_N\s$ which are statically admissible 
$\hat{\sig}_N=(\hat{\sig}_N\s)_s\in\SA(\Omega)$ using dedicated methods such 
as \cite{Lad83,Par06,Ple11, Rey14bis}. 
\end{itemize}

In \cite{Rey14} we proved the following result where $\alpha$ (denoted $\sqrt{\res^T\bz}$ in \cite{Rey14}) is the 
preconditioner-norm of the residual, a quantity that is actually computed by the solver:
\begin{equation}\label{eq:lemma2}
\alpha := \lVert \un{u}_N-\un{u}_D  \rVert_{\hooke,\Omega}  = \sqrt{\res^T\bz}
\end{equation}
\subsection{A posteriori upper bound of the error in substructured context}\label{sec:dd_error_estimation}
In \cite{Par10}, a first parallel error estimator in substructured context was 
introduced. It is based on the error in constitutive relation and reads :
\begin{equation}\label{eq:estimgus}
 \vvvert  \dep_{ex} -\dep_D \vvvert_\domain = \sqrt{\sum_s \vvvert  \dep_{ex}\s 
-\dep_D\s \vvvert_{\domain\s}^2}\leqslant 
\sqrt{\sum_s\ecrc{\dep_D\s,\hat{\sig}_N\s}{\Omega\s}}
\end{equation}

 In \cite{Rey14}, we showed that this estimator mixes two different sources of 
error : the discretization error which is inherent to the use of the finite 
element method and the algebraic error which is due to the use of 
an iterative solver which would not exist for a direct solver. The algebraic error monitors the convergence of the 
solver and can be made as small as wished. Therefore, a 
second parallel error estimator was proposed in \cite{Rey14}:

\begin{equation}\label{eq:erralg}
 \vvvert  \un{u}_{ex}-\un{u}_N  \vvvert _{\Omega} \leq  \alpha + 
\sqrt{\sum_s\ecrc{\dep_N\s,\hat{\sig}_N\s}{\Omega\s}}
\end{equation}
\begin{equation}\label{eq:erralg2}
 \vvvert  \un{u}_{ex}-\un{u}_D  \vvvert _{\Omega} \leq  \alpha + 
\sqrt{\sum_s\ecrc{\dep_N\s,\hat{\sig}_N\s}{\Omega\s}}
\end{equation}

This estimator separates the two sources of error. When the solver {has converged} the two 
displacements fields 
$\un{u}_N$ and $\un{u}_D$ are identical and equal to $\un{u}_H$ (the algebraic 
error $\alpha$ is very close to zero). $\sqrt{\sum_s\ecrc{\dep_N\s,\hat{\sig}_N\s}{\Omega\s}}$ is the estimation 
of the discretization error. 
As a consequence, at convergence, the estimators 
\eqref{eq:estimgus}, \eqref{eq:erralg} and \eqref{eq:erralg2} are identical.

\vspace{-6pt}
\section{Lower bound of the error in substructured context}\label{sec:binf}
\vspace{-2pt}
In this section, we extend sequential results to demonstrate guaranteed lower bounds of the error in substructured 
context. Moreover, we prove a theorem that enables the separation of sources of error in the lower bound. We also 
develop the methodology to build a continuous error estimate from parallel error estimation procedure. Finally, we 
extend those results to goal-oriented error estimation.

\subsection{A first lower bound of the error}

\begin{thm}\label{thm:borne_inf_DD}
 Let $\un{u}_{ex} \in \KA(\Omega)$ be the exact solution, 
$(\un{u}_D\s)_s\in\KA(\Omega)$ the displacement field defined in~\ref{sec:definitions_champs_iter} and  
$\un{w}\in\KAo(\Omega)\setminus\{\un{0}\}$ then
\begin{equation}
  \vvvert  \un{u}_{ex}-\un{u}_D  
\vvvert_{\Omega}\geq\frac{|{R}_D(\un{w})|}{\sqrt{\sum_s\vvvert  
\un{w}\s \vvvert_{\Omega\s}^2}}
\end{equation}
with
\begin{equation}
\begin{aligned}
{R}_D(\un{w})&=\sum_s R_D\s(\un{w}\s)\\
\end{aligned}
\end{equation}
\begin{equation}
\begin{aligned}
{R}_D\s(\un{w}\s)&:=\int\limits_{\domain\s}   \un{f} 
\cdot\un{w}\s    
d\domain
+ \int\limits_{\partial_g\domain\s} \un{g}\cdot\un{w}\s 
dS - \int\limits_{\Omega\s} \strain{\dep_D\s}:\hooke:\strain{\un{w}\s}    
d{\domain\s} 
\end{aligned}
\end{equation}
\end{thm}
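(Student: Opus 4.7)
The plan is to mimic the sequential derivation of \eqref{eq:binf}, lifting it to the substructured setting by exploiting the crucial fact recalled in Section~\ref{sec:definitions_champs_iter}: the Dirichlet field $(\un{u}_D\s)_s$ is globally admissible, i.e.\ $\un{u}_D\in\KA(\Omega)$. Consequently $\un{u}_{ex}-\un{u}_D\in\KAo(\Omega)$ and the exact variational formulation can be tested against any $\un{w}\in\KAo(\Omega)$.

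First, I would start from $a(\un{u}_{ex},\un{w})=L(\un{w})$ and subtract $a(\un{u}_D,\un{w})$ on both sides, so that
\begin{equation*}
a(\un{u}_{ex}-\un{u}_D,\un{w}) = L(\un{w}) - a(\un{u}_D,\un{w}).
\end{equation*}
The next step is to break the global integrals defining $a(\cdot,\cdot)$ and $L(\cdot)$ into subdomain contributions. Because the subdomains $(\Omega\s)_s$ form a non-overlapping partition of $\Omega$, and because $\partial_g\Omega=\bigcup_s\partial_g\Omega\s$ up to a set of measure zero, one has
\begin{equation*}
L(\un{w})-a(\un{u}_D,\un{w})=\sum_s\Big(\int_{\Omega\s}\un{f}\cdot\un{w}\s\,d\Omega+\int_{\partial_g\Omega\s}\un{g}\cdot\un{w}\s\,dS-\int_{\Omega\s}\strain{\un{u}_D\s}:\hooke:\strain{\un{w}\s}\,d\Omega\Big),
\end{equation*}
which is exactly $\sum_s R_D\s(\un{w}\s)=R_D(\un{w})$. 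So $a(\un{u}_{ex}-\un{u}_D,\un{w})=R_D(\un{w})$.

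Finally, I would apply the Cauchy--Schwarz inequality in the energy inner product to obtain
\begin{equation*}
|R_D(\un{w})|=|a(\un{u}_{ex}-\un{u}_D,\un{w})|\leq\vvvert\un{u}_{ex}-\un{u}_D\vvvert_\Omega\,\vvvert\un{w}\vvvert_\Omega,
\end{equation*}
and then use the broken-energy identity $\vvvert\un{w}\vvvert_\Omega^2=\sum_s\vvvert\un{w}\s\vvvert_{\Omega\s}^2$ (again from additivity of the integral over the partition). Dividing by the nonzero quantity $\sqrt{\sum_s\vvvert\un{w}\s\vvvert_{\Omega\s}^2}$ yields the announced bound.

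The proof is essentially a template: the one subtle step is the first one, where we need $\un{u}_D\in\KA(\Omega)$ globally so that $\un{u}_{ex}-\un{u}_D$ is an admissible test function in $\KAo(\Omega)$ and $R_D(\un{w})$ is indeed equal to $a(\un{u}_{ex}-\un{u}_D,\un{w})$. For a generic broken displacement (like $\un{u}_N$) this identity would fail because of the interface jumps; this is precisely why the theorem is stated with $\un{u}_D$ and not $\un{u}_N$, and this is the place where the separation of sources of error discussed later will require a more careful treatment.
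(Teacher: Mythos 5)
Your proof is correct and follows essentially the same route as the paper: the paper simply invokes the sequential bound \eqref{eq:binf} with $\dep_H$ replaced by the globally admissible field $\un{u}_D\in\KA(\Omega)$, and then rewrites $R_D(\un{w})=L(\un{w})-a(\un{u}_D,\un{w})$ by additivity over the subdomains, exactly the two ingredients you spell out. Your expanded justification (residual identity $a(\un{u}_{ex}-\un{u}_D,\un{w})=R_D(\un{w})$, Cauchy--Schwarz, broken-energy identity) is just the unpacked version of the paper's ``direct application'' step, and your closing remark about why $\un{u}_D$ rather than $\un{u}_N$ must be used is exactly the right observation.
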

\begin{proof}
This property is the direct application of  \eqref{eq:binf} where we replace
the displacement field $\dep_H \in \KA(\Omega)$ by 
$(\un{u}_D\s)_s\in\KA(\Omega)$.
The residual $R_D(\un{w})$ can be rewritten:
\begin{equation}
\begin{aligned}
R_D(\un{w})&=\sum_s R_D\s(\un{w}\s)\\
&=\sum_s \left(\int\limits_{\domain\s}   \un{f} 
\cdot\un{w}\s    
d\domain+ \int\limits_{\partial_g\domain\s} 
\un{g}\cdot\un{w}\s 
dS - \int\limits_{\Omega\s} \strain{\dep_D\s}:\hooke:\strain{\un{w}\s}    
d{\domain\s}  \right) \\
&=L(\un{w})-a(\dep_D,\un{w})
\end{aligned}
\end{equation}
\end{proof}

In practice, we choose $\un{w}\s\in\KAoo(\Omega\s)\setminus\{\un{0}\}\subset\KAo(\Omega)\setminus\{\un{0}\}$, which 
corresponds to imposing the nullity along the interface and which is {inexpensive} since 
it does not imply exchanges between subdomains. In subsection~\ref{sec:construction_w}, we will give details about the 
computation of $\un{w}$. Therefore the computation of a lower bound is as parallel 
as for the upper bound. In the assessments in section~\ref{sec:assessment}, we will 
verify that this lower bound is as accurate as the one obtained in the sequential 
context and that the quality neither depends on the approach (primal or dual) 
nor on the substructuring.
Moreover, this lower bound is computable whatever the state of the iterative 
solver (converged or not).

\subsection{Lower bound with separation of sources of error }
Continuing the philosophy of separating the sources of error as detailed in 
\cite{Rey14}, we propose a second lower bound : 

\begin{thm}\label{thm:borne_inf_dd_separation_1}
Let $\un{u}_{ex} \in \KA(\Omega)$ be the exact solution, 
$(\un{u}_D\s)_s\in\KA(\Omega)$ and 
$(\un{u}_N\s)_s\in\broken$ the displacement fields defined in~\ref{sec:definitions_champs_iter} 
and $\un{w}\in\KAo(\Omega)\setminus\{\un{0}\}$ , then
 \begin{equation}\label{eq:binf_separation1a}
  \vvvert  \un{u}_{ex}-\un{u}_D  \vvvert 
_{\Omega}\geq\left|\frac{|{R}_N(\un{w})|}{ \sqrt{\sum_s\vvvert  
\un{w}\s \vvvert_{\Omega\s}^2} }
 -\frac{|a(\un{u}_D-\un{u}_N,\un{w})|}{ \sqrt{\sum_s\vvvert  
\un{w}\s \vvvert_{\Omega\s}^2}}\right|
\end{equation}
which leads to the coarser bound
 \begin{equation}\label{eq:binf_separation1b}
  \vvvert  \un{u}_{ex}-\un{u}_D  \vvvert 
_{\Omega}
\geq\frac{|R_N(\un{w})|}{ \sqrt{\sum_s\vvvert  
\un{w}\s \vvvert_{\Omega\s}^2}} -\alpha
\end{equation}
with 
\begin{equation}
\begin{aligned}
{R}_N(\un{w})&=\sum_s R_N\s(\un{w}\s)\\
\end{aligned}
\end{equation}
\begin{equation}
\begin{aligned}
{R}_N\s(\un{w}\s)&:=\int\limits_{\domain\s}   \un{f} 
\cdot\un{w}\s    
d\domain
+ \int\limits_{\partial_g\domain\s} \un{g}\cdot\un{w}\s 
dS - \int\limits_{\Omega\s} \strain{\dep_N\s}:\hooke:\strain{\un{w}\s}    
d{\domain\s} 
\end{aligned}
\end{equation}

\end{thm}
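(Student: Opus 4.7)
The plan is to reduce to Theorem~\ref{thm:borne_inf_DD}, which already gives a lower bound in terms of the $R_D$ residual, and then rewrite $R_D(\un{w})$ as the sum of $R_N(\un{w})$ and a term that encodes the algebraic jump between $\un{u}_N$ and $\un{u}_D$. Comparing the subdomain-wise definitions of $R_D\s$ and $R_N\s$, the two residuals differ only by the replacement of $\strain{\un{u}_D\s}$ by $\strain{\un{u}_N\s}$ in the last integral. Extending the bilinear form $a$ to the broken space $\broken$ by summing subdomain contributions (which is necessary since $\un{u}_N \in \broken$ is not interface-continuous), this immediately yields
\[
R_D(\un{w}) \;=\; R_N(\un{w}) \;-\; a(\un{u}_D-\un{u}_N,\un{w}).
\]

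To obtain \eqref{eq:binf_separation1a}, I would apply the reverse triangle inequality $|x-y|\geq \bigl||x|-|y|\bigr|$ to this identity and divide by $\sqrt{\sum_s \vvvert \un{w}\s\vvvert_{\Omega\s}^2}$; inserting the result into the lower bound furnished by Theorem~\ref{thm:borne_inf_DD} finishes the first part. For the coarser inequality \eqref{eq:binf_separation1b}, I would then control the algebraic cross term: a Cauchy--Schwarz inequality on each subdomain for the $\hooke$-weighted scalar product, followed by a discrete Cauchy--Schwarz over subdomains, gives
\[
|a(\un{u}_D-\un{u}_N,\un{w})| \;\leq\; \vvvert \un{u}_D-\un{u}_N\vvvert_\Omega \;\sqrt{\sum_s\vvvert \un{w}\s\vvvert_{\Omega\s}^2}.
\]
Invoking \eqref{eq:lemma2}, which identifies $\vvvert \un{u}_D-\un{u}_N\vvvert_\Omega$ with the preconditioner-norm of the residual $\alpha$, shows that the second ratio on the right of \eqref{eq:binf_separation1a} is at most $\alpha$; dropping the outer absolute value (which can only weaken the inequality to a trivial one when the bracket is negative) then yields \eqref{eq:binf_separation1b}.

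The only delicate point, and the one that I would be careful with, is the consistent broken interpretation of $a$ whenever $\un{u}_N$ appears as an argument. Once this convention is fixed, the result is essentially a reverse-triangle-plus-Cauchy--Schwarz argument built on top of Theorem~\ref{thm:borne_inf_DD}, with relation \eqref{eq:lemma2} providing the clean bridge from the broken-norm discrepancy $\vvvert \un{u}_D-\un{u}_N\vvvert_\Omega$ to the scalar algebraic indicator $\alpha$ that is already computed by the iterative solver.
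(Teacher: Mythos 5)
Your proposal is correct and follows essentially the same route as the paper: reduction to Theorem~\ref{thm:borne_inf_DD}, the identity $R_D(\un{w})=R_N(\un{w})-a(\un{u}_D-\un{u}_N,\un{w})$ (with $a$ understood subdomain-wise), the reverse triangle inequality for \eqref{eq:binf_separation1a}, and the double Cauchy--Schwarz argument combined with \eqref{eq:lemma2} for \eqref{eq:binf_separation1b}. Your explicit remark on the broken interpretation of $a$ is a point the paper leaves implicit, but the argument is the same.
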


\begin{proof}
 
The proof of the first inequality is based on theorem~\ref{thm:borne_inf_DD} and on the triangle inequality : 
 \begin{equation}
\begin{aligned}
 \vvvert  \un{u}_{ex}-\un{u}_D  \vvvert 
_{\Omega}&\geq\frac{|{R}_D(\un{w})|}{ \sqrt{\sum_s\vvvert  
\un{w}\s \vvvert_{\Omega\s}^2}}\\
 \vvvert  \un{u}_{ex}-\un{u}_D  \vvvert 
_{\Omega}&\geq\frac{|L(\un{w})-a(\un{u}_D,\un{w})|}{ \sqrt{\sum_s\vvvert  
\un{w}\s \vvvert_{\Omega\s}^2}}\\
&\geq\frac{|L(\un{w})-a(\un{u}_N,\un{w})-a(\un{u}_D-\un{u}_N,\un{w})|}{\sqrt{
\sum_s\vvvert  
\un{w}\s \vvvert_{\Omega\s}^2}}\\
&\geq\left|\frac{|{R}_N(\un{w})|}{ \sqrt{\sum_s\vvvert  
\un{w}\s \vvvert_{\Omega\s}^2} }
 -\frac{|a(\un{u}_D-\un{u}_N,\un{w})|}{ \sqrt{\sum_s\vvvert  
\un{w}\s \vvvert_{\Omega\s}^2}}\right|\\
\end{aligned}
\end{equation}
which proves \eqref{eq:binf_separation1a}. \eqref{eq:binf_separation1b} is simply based on the remark that
 \begin{equation}
 \left|\frac{|R_N(\un{w})|}{ \sqrt{\sum_s\vvvert  
\un{w}\s \vvvert_{\Omega\s}^2} }
 -\frac{|a(\un{u}_D-\un{u}_N,\un{w})|}{ \sqrt{\sum_s\vvvert  
\un{w}\s 
\vvvert_{\Omega\s}^2}}\right|\geq\frac{|R_N(\un{w})|}{ 
\sqrt{\sum_s\vvvert  
\un{w}\s \vvvert_{\Omega\s}^2} }
 -\frac{|a(\un{u}_D-\un{u}_N,\un{w})|}{ \sqrt{\sum_s\vvvert  
\un{w}\s \vvvert_{\Omega\s}^2}}
\end{equation}
and using twice the Cauchy-Schwarz inequality, we have : 
 \begin{equation}
\begin{aligned}
|a(\un{u}_D-\un{u}_N,\un{w})|&= |\sum_s \int_{\Omega\s} 
\strain{\un{u}_D\s-\un{u}_N\s}: \hooke:\strain{\un{w}\s}|\\
&\leq \sum_s  \vvvert  \un{u}_{N}\s-\un{u}_D\s  \vvvert_{\Omega\s} \vvvert  
\un{w}\s \vvvert_{\Omega\s}\\
&\leq   \sqrt{\sum_s\vvvert  \un{u}_{N}\s-\un{u}_D\s  \vvvert^2_{\Omega\s}} 
\sqrt{\sum_s\vvvert  
\un{w}\s \vvvert^2_{\Omega\s}}\\
\end{aligned}
\end{equation}
Finally, using the equality \eqref{eq:lemma2}: 
 \begin{equation}
  \vvvert  \un{u}_{ex}-\un{u}_D  \vvvert 
_{\Omega}\geq\frac{|R_N(\un{w})|}{ \sqrt{\sum_s\vvvert  
\un{w}\s \vvvert_{\Omega\s}^2}} -\alpha
\end{equation}

\end{proof}

 As said earlier, the term $\alpha$ is a measure of the residual so it is purely 
algebraic whereas the first term of the inequality is mainly driven  by the discretization error. 
{During the first 
iterations, the second lower bound is not 
accurate because the algebraic error 
prevails so that $\frac{|L(\un{w})-a(\un{u}_N,\un{w})|}{ \sqrt{\sum_s\vvvert  
\un{w}\s \vvvert_{\Omega\s}^2}} -\alpha$ is negative }and it is a trivial lower bound of the positive true error  
$\vvvert 
\un{u}_{ex}-\un{u}_D  \vvvert_{\Omega}$.
When the solver reaches convergence, the three lower bounds in 
theorems~\ref{thm:borne_inf_DD} and~\ref{thm:borne_inf_dd_separation_1} are 
identical.

\subsection{Reconstruction of admissible field $\un{w}$}\label{sec:construction_w}

The upper bounds of the error \eqref{eq:erralg} or \eqref{eq:erralg2} require the 
construction of a statically admissible field $(\hat{\sig}_N\s)_s$. In case the 
flux-free technique is chosen, it is 
possible to construct a continuous field $\un{w}\s \in \KAoo(\Omega\s)$ using 
the methodology developed in \cite{Par06} for each subdomain in parallel. As a 
consequence, 
\begin{equation}
\left((\Pi_h(\sum_{i\in\setvertex} \phi_H^i  \underline{e}^i ))\s\right)_s\in\broken
\end{equation}
In order to have $\un{w}\s\in \KAoo(\Omega\s)$, we choose not to sum the 
contributions 
from the nodes located on the interface of the subdomain $(s)$. They will be 
denoted as $\setvertex_\Gamma$. $\un{w}\s$ is defined by :
\begin{equation}
\un{w}\s=(\Pi_h(\sum_{i\in\setvertex\setminus\setvertex_\Gamma} \varphi_H^i 
\underline{e}^i ))\s 
\end{equation}
Therefore $\un{w}\s\in \KAoo(\Omega \s)$ and
$\un{w}=(\un{w}\s)_s\in \KAo(\Omega)$.

\begin{rem}
Using the subtle trick in \cite{Par06}, the computation of the 
discretized fields is eased. Indeed:
\begin{equation}
\mathbf{w}\s= \sum_{i\in\setvertex\setminus\setvertex_\Gamma} 
{\mathbf{\boldsymbol{\varphi}_H^i }} \odot \mathbf{{e}^i }\s
\end{equation}
and
\begin{equation}
\mathbf{R_D}\s(\mathbf{w}\s)\\=\sum_{i\in\setvertex-\setvertex_\Gamma} 
{\mathbf{\boldsymbol{\varphi}_H^i }} \odot \mathbf{R_D}\s(\mathbf{{e}^i }\s)
\end{equation}
where $\mathbf{\boldsymbol{\varphi}_H^i } $ gathers the nodal values of the 
shape function $\varphi_H^i$ projected on the richer space used 
to solve the star-patch problem whose $\mathbf{{e}^i }\s$ is the discretized 
solution and where $\odot$ represents the term by term multiplication.
 
\end{rem}

\begin{rem}
If the method chosen to construct the admissible field is based on {elements problems }
\cite{Die03}, it is always possible to construct a displacement 
field $\un{w}\s \in \KAoo(\Omega\s)$ by computing the mean value along the 
edges inside the subdomains and imposing zero along the interfaces between 
subdomains.
\end{rem}

\subsection{Goal-oriented error estimation}
Goal-oriented error estimation offers the possibility to have upper and lower bounds of the unknown exact value of 
a quantity of interest. Among various techniques, extractors (see \cite{Bec96} for instance) are the most 
common tools to define linear quantities of interest. They lead to the {definition and the solving} of 
an adjoint problem.
\subsubsection{Definition of the linear quantity of interest and of the adjoint 
problem }
Let $\widetilde{L}$ 
be the linear functional defining the quantity of interest $I$: 
\begin{equation}
I=\widetilde{L}(\dep)=\int_\Omega(\sig_{\Sigma} : \strain{\dep} + 
\un{f}_{\Sigma} \dep 
)d\Omega
\end{equation}
where $\sig_{\Sigma}$ and $\un{f}_{\Sigma}$ are extractors.

We introduce the affine subspace of statically admissible fields 
(adjoint SA-fields) for the 
adjoint problem:
\begin{multline}\label{eq:SA_adjoint}
  \SAt(\omega)
  =\Bigg\lbrace   \uu{\tau}\in  \left(\mathtt{L}^2(\omega)\right)^{\dime\times 
\dime}_{\text{sym}}; \
    \forall  \depv \in  \KAoo(\omega),\ \int\limits_\omega
  \uu{\tau}:\strain{\depv}    d\omega   =    \tilde{L} (\depv)   \Bigg\rbrace
\end{multline}

The adjoint problem set on $\domain$ can be formulated as:
\begin{equation}\label{eq:refpbad}
  \text{Find } \left(\depadj_{ex},\sigadj_{ex} 
\right)\in\KAo(\domain)\times\SAt(\domain) \text{ such  that } 
\ecr{\depadj_{ex},\sigadj_{ex}}{\domain}=0
\end{equation}
The solution to this problem, named exact solution, exists and is unique.

\begin{rem}
 The formulation \eqref{eq:refpbad} is equivalent to the classical 
following formulation:
 
\begin{equation}
 \text{Find }\depadj_{ex} \in \KAo(\domain) \text{ such that } \forall  \depv \in  
\KAo(\Omega) \text{,   }
a(\depadj_{ex},\depv)=\widetilde{L}(\depv)
\end{equation}
where a is the classical bilinear form \eqref{eq:def_a_bil}.

\end{rem}

The adjoint problem is usually solved with the finite element method. The 
mesh can differ from the one used for the forward problem. The 
approximated adjoint
displacement is ${\depadj}_{\widetilde{H}}$.

The discretization error for the adjoint problem is 
\begin{equation}
 e_{discr}= \vvvert \un{\widetilde{e}}_{discr} \vvvert_{\Omega} = \vvvert 
\depadj_{ex}-\depadj_{\widetilde{H}}\vvvert_{\Omega}
\end{equation}

\subsubsection{Error estimation of quantities of interest}
As said earlier the adjoint problem is meant to extract one quantity of interest 
in the forward problem. Let $I_{ex}=\tilde{L}(\dep_{ex})$ be the unknown exact 
value of the quantity of interest. $I_H=\tilde{L}({\dep}_D)$ is an approximation of 
this quantity of interest.

A bounding of the exact value of the quantity of interest $I_{ex}$ is 
\cite{Lad06, Lad08}:
\begin{equation}\label{eq:majoration_IHH2}
\arrowvert I_{ex}-I_H-I_{HH2} \arrowvert\leq 
\frac{1}{2}\ecr{\depH,\hat{\sig}_H}{\Omega} 
\ecr{{\depadj}_{\widetilde{H}},\hat{\sigadj}_{\widetilde{H}}}{\Omega}
\end{equation}
where
\begin{equation}\label{eq:def_IHH2}
 I_{HH2}=  \int\limits_\Omega  
\frac{1}{2}(\hat{\sigadj}_{\widetilde{H}}+\hooke:\strain{\depadj_{\widetilde{H}}
}
) :\hooke^{-1} :(\hat{\sig}_H-\hooke:\strain{\depH})   d\Omega   
\end{equation}
and where $\hat{\sigadj}_{\widetilde{H}} \in   \SAtH(\Omega)$.

The error on the quantity of interest can also be estimated using the 
parallelogram identity \cite{Pru99}: 
\begin{equation}\label{eq:id_parallelogramme}
\left\{\begin{aligned}
 I_{ex}-I_H=\widetilde{L}(\un{e}_{discr})&= 
a(\un{e}_{discr},\un{\widetilde{e}}_{discr}) \\
 &= a(\kappa\un{e}_{discr},\frac{1}{\kappa}\un{\widetilde{e}}_{discr}) \\
 &= \frac{1}{4} [\vvvert{{\kappa\un{e}_{discr}+ 
\frac{1}{\kappa}\un{\widetilde{e}}_{discr} }}\vvvert^2_{\Omega}    
-\vvvert{\kappa\un{e}_{discr}- 
\frac{1}{\kappa}\un{\widetilde{e}}_{discr}}\vvvert^2_{\Omega}]\\
 \end{aligned}\right.
\end{equation}
where $\kappa$ is a scalar parameter whose optimal value is: 
\begin{equation}\label{eq:kappa}
\kappa=\frac{\ecr{{\depadj}_{H},\hat{\sigadj}_{H}}{
\Omega}}{\ecr{\depH,\hat{\sig}_H}{\Omega}} 
\end{equation} 
This optimal value minimizes the difference and thus improves the quality 
of the bounding. Introducing upper and lower bounds: 
\begin{equation}\label{eq:boundsqti}
\begin{aligned}
 \beta^+_{inf} &\leq \vvvert\kappa\un{e}_{discr}+ 
\frac{1}{\kappa}\un{\widetilde{e}}_{discr} \vvvert_{\Omega}^2 \leq\beta^+_{sup} \\
  \beta^-_{inf} &\leq \vvvert\kappa\un{e}_{discr}- 
\frac{1}{\kappa}\un{\widetilde{e}}_{discr}\vvvert_{\Omega}^2\leq\beta^-_{sup} \end{aligned}
\end{equation}
it is possible to obtain lower and upper bounds of the error on the 
quantity of interest :
\begin{equation}\label{eq:id_parallelogramme_appli}
\frac{1}{4} \beta^+_{inf} -\frac{1}{4}  \beta^-_{sup}   \leq 
I_{ex}-I_H  \leq \frac{1}{4} \beta^+_{sup} -\frac{1}{4}  
\beta^-_{inf} 
\end{equation}

\subsubsection{Application to the substructured context}\label{sec:definitions_champs_iter_adjoint}

Let us suppose that the forward and adjoint problems are solved on the same mesh and on the same substructuring. Since 
the two problems share the same stiffness matrices on every subdomain, they can be solved 
simultaneously using a block algorithm. Following the same methodology described in 
section~\ref{sec:definitions_champs_iter}, one can compute the following admissible fields for the adjoint problem:
\begin{itemize}
\item $(\depadj_D\s)_s\in \KAo(\domain)$: {displacement field which results from a Dirichlet problem and 
which is 
thus globally admissible}
\item  $\widetilde{\lam}_N\s$ :  nodal reactions which are balanced at 
the interface.  
\item$(\depadj_N\s)_s\in\broken$:  { displacement field which results from a Neumann problem and which 
is 
not 
globally admissible}
\item $\sigadj_N\s $: stress field associated to $\depadj_N\s$ ($\sigadj_N\s = \hooke:\strain{\depadj_N\s}$). It 
can be {used} (with 
additional input $\widetilde{\lam}_N\s$) to build in parallel stress fields $ \hat{\sigadj}_N\s$ which 
are statically admissible 
$\hat{\sigadj}_N=(\hat{\sigadj}_N\s)_s\in\SAt(\Omega)$. 
\end{itemize}
We also have the equality that expresses the distance between the Neumann and Dirichlet displacement fields in terms of 
the algebraic residual $\widetilde{\alpha}$ of the adjoint problem :
\begin{equation}\label{eq:lemma2_adj}
\lVert \depadj_N-\depadj_D \rVert_{\hooke,\Omega} = \widetilde{\alpha}
\end{equation}

For more details about the computation of those fields, the reader can refer to~\cite{Rey15}.

The upper bounds of the global error presented in section~\ref{sec:dd_error_estimation} and the lower bound in the 
theorem~\ref{thm:borne_inf_DD} can be applied on the adjoint problem.
\paragraph{Upper and lower bounds for goal-oriented error estimation}

We demonstrate two properties that give upper and lower bounds of the terms in the parallelogram identity. The 
properties are merely the application of the 
results on goal-oriented error estimation \cite{Lad06, Lad08, Par06} into a 
substructured context.

\begin{cor}\label{prop:S+}
Using notations of paragraph~\ref{sec:definitions_champs_iter} and~\ref{sec:definitions_champs_iter_adjoint}
 \begin{equation}
 \begin{aligned}
  &\beta^+_{inf}\leq {\vvvert\kappa\un{e}_{discr}+ 
\frac{1}{\kappa}\un{\widetilde{e}}_{discr} \vvvert_{\Omega}}^2 \leq  
\beta^+_{sup}\\
  &\beta^-_{inf}\leq {\vvvert\kappa\un{e}_{discr}- 
\frac{1}{\kappa}\un{\widetilde{e}}_{discr} \vvvert_{\Omega}}^2 \leq  
\beta^-_{sup}
\end{aligned}
 \end{equation}
with 
\begin{equation}\label{eq:identite_para_dd+}\left\{
\begin{aligned}
 \beta^+_{sup}&=2 \sqrt{\sum_s \ecr{\dep_D\s,\hat{\sig}_N\s}{\Omega\s} ^2}
 \sqrt{\sum_s 
\ecr{{\depadj}_{D}\s,\hat{\sigadj}_{N}\s}{\Omega\s}^2} \\
&+2 \sum_s \int_{\domain\s} 
(\hat{\sigadj}\s_{N}-\hooke\strain{\depadj\s_{D}}):\hooke^{
-1}:(\hat{\sig}\s_N-\hooke:\strain{\dep_D\s})  d\domain\\
\beta^+_{inf}&= \frac{(\kappa {R_D}( 
\un{z}^+) + \frac{1}{\kappa} \widetilde{{R}}_D(
\un{z}^+))^2}{{\sum_s \vvvert(\un{z}^+)\s \vvvert^2_{\Omega\s}}}\\
 &\text{ with }(\underline{z}^+)\s=\kappa \un{w}\s + 
\frac{1}{\kappa}\un{\widetilde{w}}\s
\end{aligned}\right.
\end{equation}
and
\begin{equation}\label{eq:identite_para_dd-}\left\{
\begin{aligned}
 \beta^-_{sup}&=2 \sqrt{\sum_s \ecr{\dep_D\s,\hat{\sig}_N\s}{\Omega\s} ^2}
 \sqrt{\sum_s 
\ecr{{\depadj}_{D}\s,\hat{\sigadj}_{N}\s}{\Omega\s}^2} \\
&-2 \sum_s \int_{\domain\s} 
(\hat{\sigadj}\s_{N}-\hooke\strain{\depadj\s_{D}}):\hooke^{
-1}:(\hat{\sig}\s_N-\hooke:\strain{\dep_D\s})  d\domain\\
\beta^-_{inf}&= \frac{(\kappa {R_D}( 
\un{z}^-) - \frac{1}{\kappa} \widetilde{{R}}_D(
\un{z}^-))^2}{{\sum_s \vvvert(\un{z}^-)\s \vvvert^2_{\Omega\s}}}\\
 &\text{ with }(\underline{z}^-)\s=\kappa \un{w}\s - 
\frac{1}{\kappa}\un{\widetilde{w}}\s
\end{aligned}\right.
\end{equation}
\end{cor}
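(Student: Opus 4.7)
My plan is to split the argument into two halves: for the lower bounds $\beta^\pm_{inf}$ I would directly apply Theorem~\ref{thm:borne_inf_DD} to appropriate linear combinations of the continuous error estimates, while for the upper bounds $\beta^\pm_{sup}$ I would adapt the classical Ladev\`eze derivation of goal-oriented bounds to the substructured context.

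For the lower bounds, I would first observe that by linearity $\un{z}^\pm = \kappa\un{w} \pm \tfrac{1}{\kappa}\un{\widetilde{w}} \in \KAo(\Omega)$ and each restriction $(\un{z}^\pm)\s \in \KAoo(\Omega\s)$, since the local pieces $\un{w}\s$ and $\un{\widetilde{w}}\s$ are constructed to vanish on the subdomain interfaces (cf.\ subsection~\ref{sec:construction_w}). Combining the residual equations \eqref{eq:def_residu_binf} written for the forward and adjoint problems with coefficients $\kappa$ and $\pm\tfrac{1}{\kappa}$ gives
\begin{equation*}
 a\bigl(\kappa\un{e}_{discr} \pm \tfrac{1}{\kappa}\un{\widetilde{e}}_{discr},\ \un{z}^\pm\bigr) = \kappa R_D(\un{z}^\pm) \pm \tfrac{1}{\kappa}\widetilde{R}_D(\un{z}^\pm),
\end{equation*}
and the Cauchy--Schwarz inequality in the energy inner product, together with the subdomain splitting $\vvvert\un{z}^\pm\vvvert^2_\Omega = \sum_s\vvvert(\un{z}^\pm)\s\vvvert^2_{\Omega\s}$, delivers $\beta^\pm_{inf}$ exactly as in the proof of Theorem~\ref{thm:borne_inf_DD}.

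For the upper bounds, I would start from the global Prager--Synge identity \eqref{eq:erdc} applied to $(\un{u}_D,\hat{\sig}_N)\in\KA(\Omega)\times\SA(\Omega)$ and its adjoint analog $(\depadj_D,\hat{\sigadj}_N)\in\KAo(\Omega)\times\SAt(\Omega)$, which yield $\vvvert\un{e}_{discr}\vvvert^2_\Omega \leqslant \sum_s\ecrc{\un{u}_D\s,\hat{\sig}_N\s}{\Omega\s}$ (after splitting the global integrand subdomain-wise) and its adjoint counterpart. Expanding
\begin{equation*}
 \vvvert\kappa\un{e}_{discr}\pm\tfrac{1}{\kappa}\un{\widetilde{e}}_{discr}\vvvert^2_\Omega = \kappa^2\vvvert\un{e}_{discr}\vvvert^2_\Omega \pm 2a(\un{e}_{discr},\un{\widetilde{e}}_{discr}) + \tfrac{1}{\kappa^2}\vvvert\un{\widetilde{e}}_{discr}\vvvert^2_\Omega,
\end{equation*}
the optimal substructured choice of $\kappa$ (analog of \eqref{eq:kappa}) combines the two pure-norm contributions into $2\sqrt{(\sum_s\ecrc{\un{u}_D\s,\hat{\sig}_N\s}{\Omega\s})(\sum_s\ecrc{\depadj_D\s,\hat{\sigadj}_N\s}{\Omega\s})}$ by the AM--GM equality case. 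For the cross term $\pm 2a(\un{e}_{discr},\un{\widetilde{e}}_{discr})$, I would invoke the classical Ladev\`eze identity to rewrite it, up to an ECR-controlled remainder absorbed into the product term, as $\pm 2\sum_s\int_{\Omega\s}(\hat{\sigadj}_N\s - \hooke{:}\strain{\depadj_D\s}):\hooke^{-1}:(\hat{\sig}_N\s - \hooke{:}\strain{\un{u}_D\s})\,d\domain$, which is precisely the integral term of $\beta^\pm_{sup}$.

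The main obstacle I anticipate is the cross-term manipulation for the upper bounds: although $\un{u}_D,\depadj_D$ are globally KA and $\hat{\sig}_N,\hat{\sigadj}_N$ are globally SA so that Ladev\`eze's HH2 identity applies globally, care is required to (i) bound the cross Cauchy--Schwarz contribution by the \emph{square root of the sum} of the per-subdomain ECR squares (rather than by the sum of per-subdomain ECRs) and (ii) recover the two distinct assembly structures that appear side-by-side in $\beta^\pm_{sup}$ -- a plain sum of subdomain integrals for the HH2 contribution and a square root of sums of squares for the ECR product -- from a single global expansion of the energy norm.
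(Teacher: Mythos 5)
Your proposal is correct and follows the route the paper intends: the paper gives no written proof of this corollary, deferring to the cited goal-oriented references, and your two-part argument --- Theorem~\ref{thm:borne_inf_DD} applied to $\un{z}^\pm\in\KAo(\Omega)$ for $\beta^\pm_{inf}$, and the Prager--Synge expansion with the cross term $a(\un{e}_{discr},\un{\widetilde{e}}_{discr})$ split into the computable integral plus a remainder bounded by $\|\sig_{ex}-\hat{\sig}_N\|_{\hooke^{-1},\Omega}\,\|\sigadj_{ex}-\hat{\sigadj}_N\|_{\hooke^{-1},\Omega}$ and absorbed via Young's inequality at the optimal $\kappa$ for $\beta^\pm_{sup}$ --- is exactly the standard derivation transposed subdomain-wise. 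The two obstacles you anticipate are in fact non-issues: since $(\dep_D,\hat{\sig}_N)$ and $(\depadj_D,\hat{\sigadj}_N)$ are globally admissible, every norm and integral in the bound is a global quantity that decomposes additively over subdomains, so the ``square root of sums of squares'' is simply the global error in constitutive relation written per subdomain and no separate per-subdomain Cauchy--Schwarz step is needed.
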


One has to pay attention to the computation of the parameter $\kappa$ which 
implies exchanges between subdomains. Indeed, this coefficient is defined by:
\begin{equation}
\kappa=\frac{\sqrt{\sum_s\ecr{{\depadj}_{D}\s,\hat{\sigadj}_{N}\s}{\Omega\s}}}{
\sqrt{\sum_s\ecr{\dep_D\s,\hat{\sig}_N\s}{\Omega\s}}}
\end{equation}
Anyhow this exchange between subdomains is already done at the end of the parallel error estimation to obtain global measures.

Despite the possibility to separate contributions in terms
$\sqrt{\sum_s\ecr{{\depadj}_{D}\s,\hat{\sigadj}_{N}\s}{\Omega\s}}$ and $
\sqrt{\sum_s\ecr{\dep_D\s,\hat{\sig}_N\s}{\Omega\s}}$, the full separation in the lower bounds
$\beta^-_{inf}$ and $\beta^-_{inf}$ is a complex task since the parameter
$\kappa$ is the ratio of 
errors mixing algebraic and discretization sources.   

However, the separation of sources for both global errors enables steering the iterative solver by an objective of precision of the quantity of 
interest (see~\cite{Rey15}). The computation of $\beta^-_{inf}$ and 
$\beta^-_{inf}$  
after convergence improves the bounding.

\vspace{-6pt}
\section{Numerical assessment}\label{sec:assessment}
\vspace{-2pt}
For all numerical examples, the behavior is linear, isotropic and elastic. The Young modulus is 1 Pa and the Poisson 
coefficient is 0.3.

\subsection{Structure with exact solution}
Let us consider a square linear elastic structure 
$\Omega=[-3l;3l]\times[-3l;3l]$ with homogeneous Dirichlet boundary conditions and plane strain hypothesis.
The domain is subjected 
to a polynomial body force such that the exact solution is known: 
\begin{equation*}
\underline{u}_{ex} = (x+3l)(x-3l)(y+3l)(y-3l)\left((y-3l)^2 \underline{e}_x + 
(y+3l)\underline{e}_y \right)
\end{equation*}
The mesh is made out of first order Lagrange triangles.
As shown in Figure~\ref{fig:poutre_decoupage} the structure is decomposed into 9 
regular subdomains.
 \begin{figure}[ht]
\centering
\includegraphics[width=.3\textwidth]{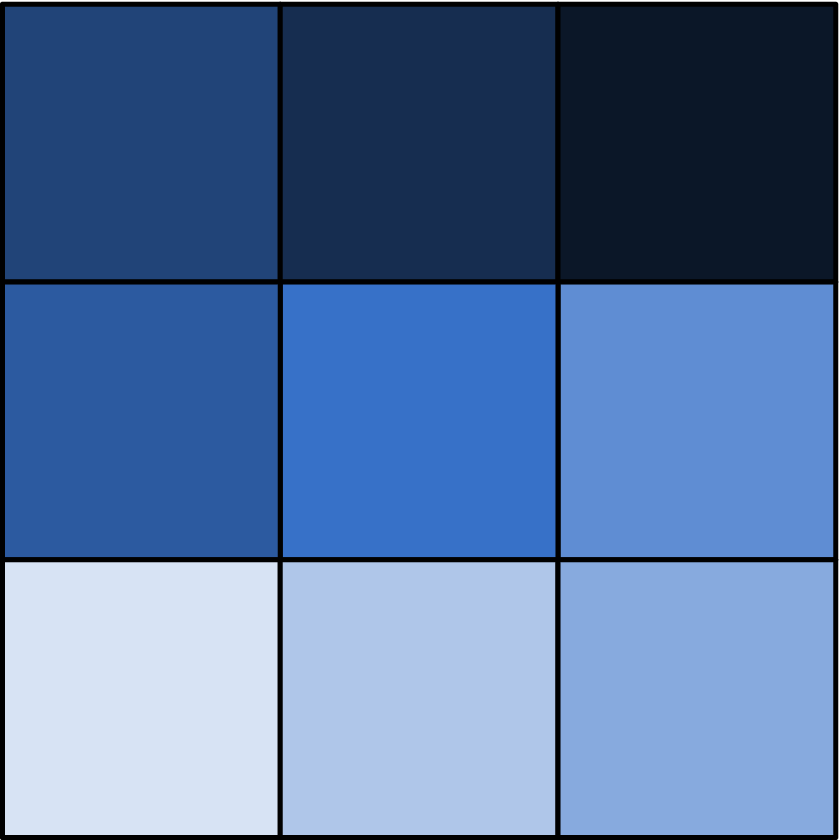}
\caption {
Substructuring}\label{fig:poutre_decoupage}
 \end{figure}

We solve  the problem with a BDD solver (primal approach). We use  the 
Flux-free technique \cite{Par06} to build statically admissible stress fields; 
each star-patch problem is solved by subdividing each element into 12 elements (h-refinement technique).
For the sake of simplicity, we note: 
\begin{itemize}
 \item $\rho=\frac{|L(\un{w})-a(\un{u}_D,\un{w})|}{ 
\sqrt{\sum_s\vvvert  
\un{w}\s \vvvert_{\Omega\s}^2} } $ the lower bound of the error
\item $\rho_{discr}=\frac{|L(\un{w})-a(\un{u}_N,\un{w})|}{ 
\sqrt{\sum_s\vvvert  
\un{w}\s \vvvert_{\Omega\s}^2} }$ the discretization part of the lower bound
\item $\rho_{alg}=\frac{|a(\un{u}_D-\un{u}_N,\un{w})|}{ \sqrt{\sum_s\vvvert  
\un{w}\s \vvvert_{\Omega\s}^2}} $ the algebraic part of the lower bound

\item $\rho_{bis}=\rho_{discr} -{\alpha}$ the lower bound with separation of sources of error

\item $\theta=\sqrt{\sum_s\ecrc{\dep_D\s,\hat{\sig}_N\s}{\Omega\s}}$ the upper bound of the error
\item $\theta_{discr}=\sqrt{\sum_s\ecrc{\dep_N\s,\hat{\sig}_N\s}{\Omega\s}}$ the discretization part of the upper bound
\end{itemize}

The quantities with the superscript $^{seq}$ are computed with a sequential 
simulation (no substructuring and use of a direct solver).

\subsubsection{Effects of the substructuring on the computation of the lower bound at convergence}
In this subsection, we compare the lower bound obtained in a sequential 
simulation with the lower bound obtained in the substructured context when the 
solver {has converged}. Since the study is done at convergence, the primal and dual approaches are 
equivalent.

\begin{figure}[ht]
\centering
\begin{tikzpicture}
\begin{loglogaxis}[
scale=1,
xlabel=$h$,
legend style={at={(.63,0.03)}, anchor=south west}]
\addplot[color=black,mark=none, dashed] table[x=h,y=binf_seq] 
{cube_binf_globale_convh.txt};
\addlegendentry{\begin{footnotesize}$\rho^{seq}$\end{footnotesize}}
\addplot[color=black,only marks, mark=square] table[x=h,y=binf_dd] 
{cube_binf_globale_convh.txt};
\addlegendentry{\begin{footnotesize}{$\rho$}\end{footnotesize}}
\addplot[color=black,mark=none] table[x=h,y=bsup_seq] 
{cube_binf_globale_convh.txt};
\addlegendentry{\begin{footnotesize}{$\theta^{seq}$}\end{footnotesize}}
\addplot[color=black,only marks, mark=star] table[x=h,y=bsup_dd] 
{cube_binf_globale_convh.txt};
\addlegendentry{\begin{footnotesize}{$\theta$}\end{footnotesize}}
\addplot[draw=violet, mark=triangle*] table[x=h,y=evraie] 
{cube_binf_globale_convh.txt};
\addlegendentry{\begin{footnotesize}$\vvvert 
\un{u}_{ex}-\depH \vvvert_\Omega$\end{footnotesize}}
\addplot[color=brown] coordinates {
(0.075, 0.5)
(0.075, 0.9375)
(0.04 , 0.5)
(0.075, 0.5)
};
\addlegendentry{\begin{footnotesize}h-slope\end{footnotesize}}
\end{loglogaxis}
\draw (-0.5,2.5) node[scale=1.,rotate=90]{Error};
\end{tikzpicture}\caption{Evolution of the upper and lower bounds of the error 
in function of the mesh size h}\label{fig:cube_binf_globale_convh}
\end{figure}
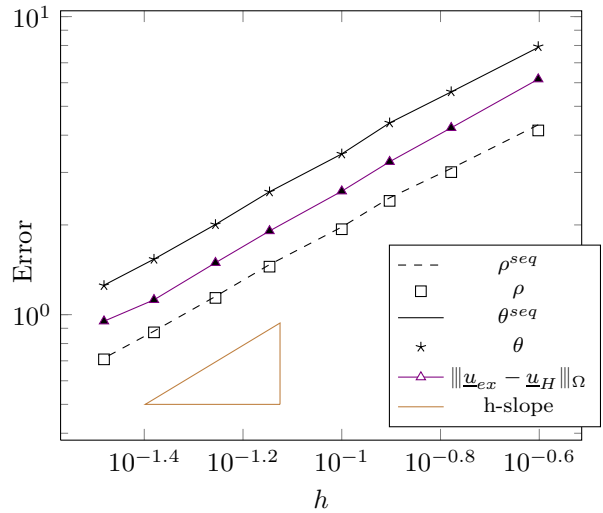

On figure~\ref{fig:cube_binf_globale_convh}, we observe that the bounds are the same for sequential and 
substructured 
computations. We also verify that the exact error is between upper and lower bounds. The 
convergence is the one expected for such a regular problem (h-slope).

Then, we compare the  bounds for several substructuring as 
illustrated in figure~\ref{fig:cube_sous-structurations}. Table~\ref{tab:cube_sous-structurations_resultats_rel} 
gathers the lower bounds 
for sequential, primal and dual approaches computed at convergence normalized by the bounds for sequential 
computation. We 
observe 
that the substructuring has quasi no influence on the accuracy of the lower bound.
\begin{figure}[ht]
\centering
\includegraphics[width=.49\textwidth]{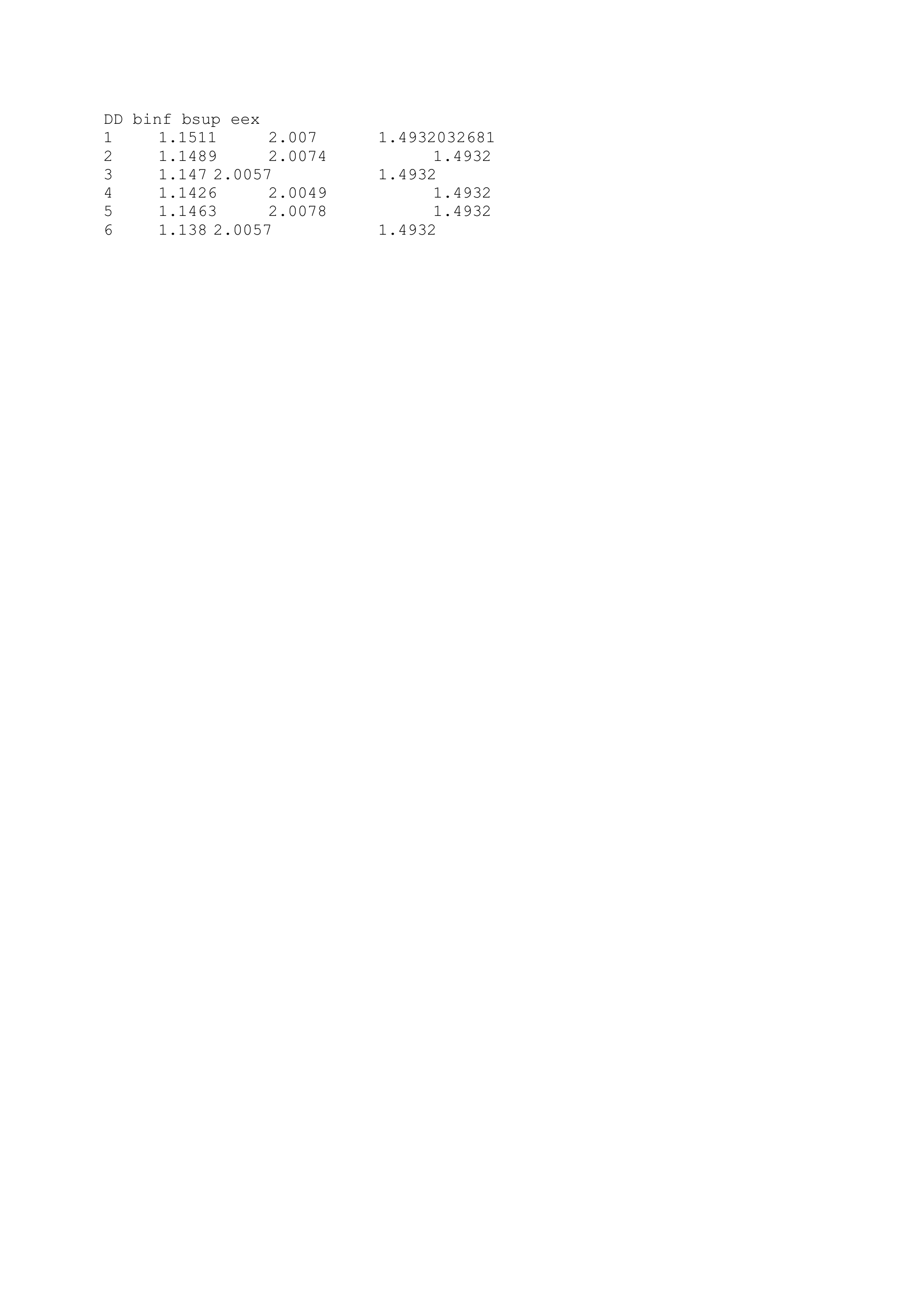}\caption{
Different substructurings} \label{fig:cube_sous-structurations}
\end{figure}



\begin{table}[ht]\centering%
\begin{tabular}{|c|c|c|c|c|c|c|}
\hline 
 & seq & b & c & d & e & f  \\
\hline 
$\frac{\theta}{\theta^{seq}}$ & 1 & 1.0002 &0.9993 &   0.9989 &  1.0004 & 0.9993\\
\hline 
$\frac{\rho}{\rho^{seq}}$ & 1& 0.9981 & 0.9964& 0.9926& 0.9958& 0.9886 \\
\hline
\end{tabular}\caption{Relative upper and lower bounds of the error for 
various substructuring}\label{tab:cube_sous-structurations_resultats_rel}
\end{table}

\subsubsection{Separation of sources of error}
In this subsection, we illustrate the separation of sources of error in 
the lower bound.

On the first graph in figure~\ref{fig:cube_binf_globale_separation}, we give the evolution of the upper and lower 
bounds and of the true error until the fifth iteration. We observe the fast convergence of those bounds. 
On the same graph, we also visualize the discretization parts $\theta_{discr}$ and
$\rho_{discr}$. The bounding is more precise and enables to define after a 
few iterations the interval in which the true error is located at convergence. The second graph in 
figure~\ref{fig:cube_binf_globale_separation} gives the 
evolution of the terms in theorem~\ref{thm:borne_inf_dd_separation_1}. The quantity $\alpha$ 
scrictly decreases along the iterations, the evolution 
of the quantity $\rho_{alg}$ is not as smooth.

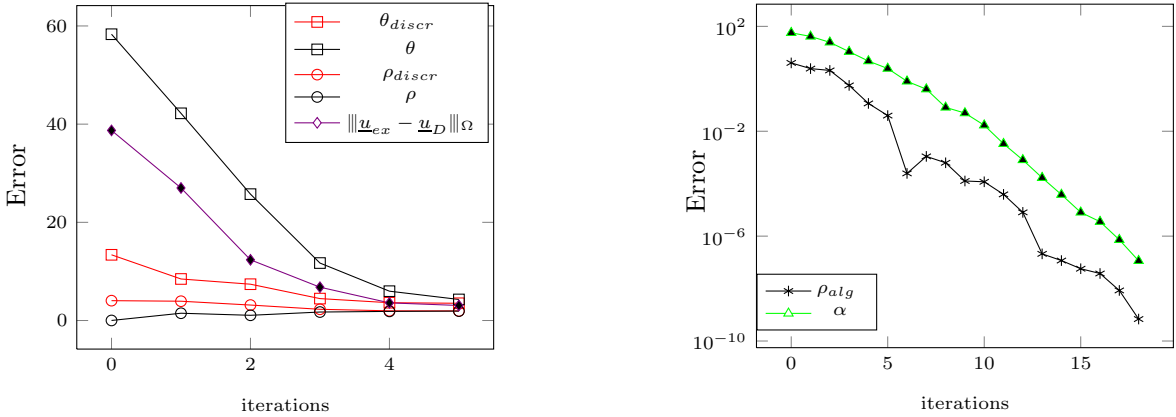
\begin{figure}[ht]
\begin{minipage}{.49\textwidth}
\begin{tikzpicture}
\begin{axis}[
every axis/.append style={font=\scriptsize},
scale=0.8,
xlabel=iterations,
legend style={at={(0.5,0.6)}, anchor=south west}]
\addplot [draw=red, mark=square] table[x=iterations,y=bsup_discr] 
{cube_binf_globale_iter_coupe.txt};
\addlegendentry{ {\scriptsize $\theta_{discr}$ } }
\addplot [draw=black, mark=square] table[x=iterations,y=bsup] 
{cube_binf_globale_iter_coupe.txt};
\addlegendentry{{\scriptsize  $\theta$}}
\addplot [draw=red, mark=o] table[x=iterations,y=binf_discr] 
{cube_binf_globale_iter_coupe.txt};
\addlegendentry{{\scriptsize $\rho_{discr}$}}
\addplot [draw=black, mark=o] table[x=iterations,y=binf] 
{cube_binf_globale_iter_coupe.txt};
\addlegendentry{{\scriptsize  $\rho$}}
\addplot [draw=violet, mark=diamond*] table[x=iterations,y=evraie] 
{cube_binf_globale_iter_coupe.txt};
\addlegendentry{{\scriptsize  $\vvvert  \un{u}_{ex}-\un{u}_D  \vvvert 
_{\Omega}$}}
\end{axis}
\draw (-0.8,2.3) node[scale=1.,rotate=90]{Error};
\end{tikzpicture}
\end{minipage}
\begin{minipage}{.49\textwidth}
\begin{tikzpicture}
\begin{semilogyaxis}[
every axis/.append style={font=\scriptsize},
scale=0.8,
xlabel=iterations,
legend style={at={(0,0.05)}, anchor=south west}]
\addplot [draw=black, mark=asterisk]  table[x=iterations,y=binf_algebrique] 
{cube_binf_globale_iter.txt};
\addlegendentry{ {\scriptsize $\rho_{alg}$ } }
\addplot [draw=green, mark=triangle*]
table[x=iterations,y=res] 
{cube_binf_globale_iter.txt};
\addlegendentry{{\scriptsize  $\alpha$}}
\end{semilogyaxis}
\draw (-0.8,2.2) node[scale=1.,rotate=90]{Error};
\end{tikzpicture}
\end{minipage}
\caption{Separation of sources of error 
 in upper and lower bounds}\label{fig:cube_binf_globale_separation}
\end{figure}

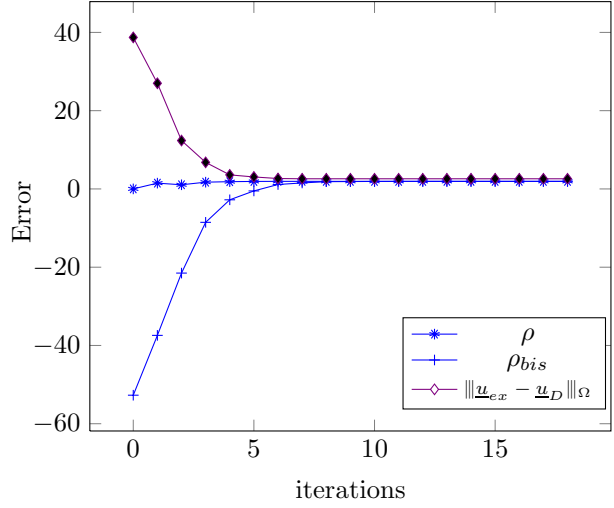
\begin{figure}[ht]
\centering
 \begin{tikzpicture}
\begin{axis}[
scale=1,
xlabel=iterations,
legend style={at={(0.6,0.05)}, anchor=south west}]
\addplot [draw=blue, mark=10-pointed star] table[x=iterations,y=binf] 
{cube_binf_globale_iter.txt};
\addlegendentry[ anchor=mid]{  
$\rho $ }
\addplot [draw=blue, mark=+] table[x=iterations,y=difference2] 
{cube_binf_globale_iter.txt};
\addlegendentry[ anchor=mid]{ $\rho_{bis}$ }
\addplot [draw=violet, mark=diamond*] table[x=iterations,y=evraie] 
{cube_binf_globale_iter.txt};
\addlegendentry{{\scriptsize  $\vvvert  \un{u}_{ex}-\un{u}_D  \vvvert 
_{\Omega}$}}
\end{axis}
\draw (-0.9,2.9) node[scale=1.,rotate=90]{Error};
\end{tikzpicture}
\caption{Evoluation of the two lower bounds during iterations}\label{fig:cube_binf_deux_separations}
\end{figure}

Figure~\ref{fig:cube_binf_deux_separations} represents the two lower bounds 
of theorem~\ref{thm:borne_inf_dd_separation_1}. As 
expected, the second lower bound is not precise at the beginning since it 
gives a negative value. However, the zero of this bound enables to tell 
the moment when the algebraic error becomes smaller than the discretization error.

\subsection{Pre-cracked structure}
We now consider a pre-cracked structure decomposed into 16 subdomains as 
illustrated in Figure~\ref{fig:Diez_adjoint}. The displacements at the base of 
the structure and on the larger hole are imposed to be zero. The upper-left 
part 
and the second hole are subjected to a constant unit pressure. We made the hypothesis of plane stress. The quantity 
of 
interest is the mean of the stress component $\sigma_{xx}$  on a region 
$\omega$ 
close to the crack. In Figure~\ref{fig:Diez_adjoint}, the loading of the 
reference problem is in blue and the loading of adjoint problem is in orange. 
We 
used the FETI algorithm (dual approach) to solve the interface problem and the 
statically admissible stress fields are built using the Flux-free technique 
\cite{Cot09,Par06} with h-refinement technique for the {solving} of local 
problems on star-patches (each element is divided into 16 elements).

\begin{figure}[ht]
\centering
\includegraphics[width=.5\textwidth]{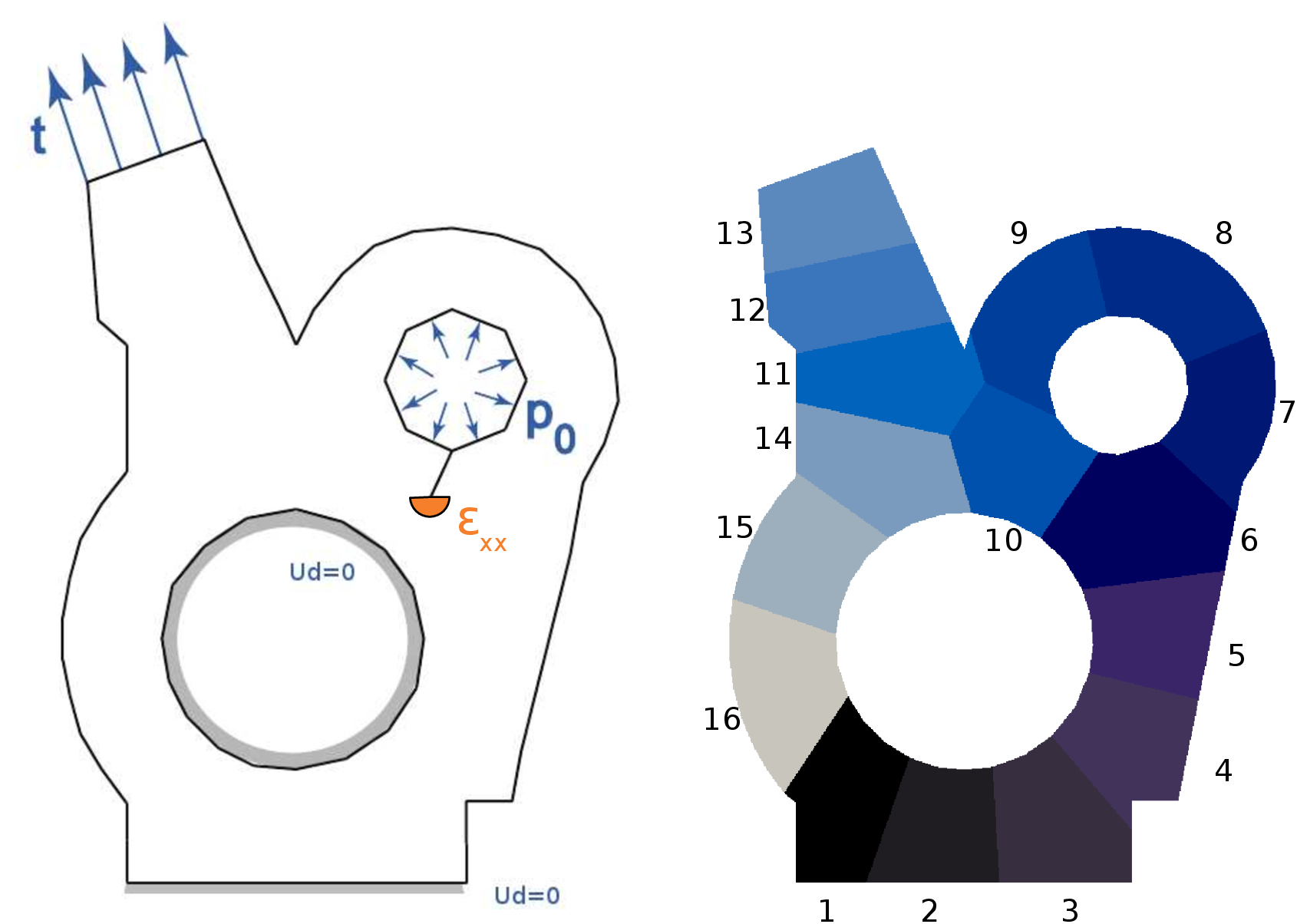}\caption{{
\footnotesize Loading of forward (blue) and adjoint problems (orange), domain 
decomposition}}\label{fig:Diez_adjoint}
\end{figure}

\subsubsection{Separation of sources of error in upper and lower bounds}

For this paragraph, we consider only the forward problem. The mesh used for this computation is composed of 4370 
degrees of freedom. We compute the global upper and lower bounds during the 
iterations and the discretization upper and lower bounds along the iterations on the same graph in 
figure~\ref{fig:diez_binf_globale_separation}.

\begin{figure}[ht]
\begin{minipage}{.49\textwidth}
\begin{tikzpicture}
\begin{axis}[
every axis/.append style={font=\scriptsize},
scale=0.8,
xlabel=iterations,
legend style={at={(0.5,0.6)}, anchor=south west},]
\addplot [draw=red, mark=square] table[x=iterations,y=bsup_discr] 
{diez_binf_globale_iter.txt};
\addlegendentry{ {\scriptsize $\theta_{discr}$ } }
\addplot [draw=black, mark=square] table[x=iterations,y=bsup] 
{diez_binf_globale_iter.txt};
\addlegendentry{{\scriptsize  $\theta$}}
\addplot [draw=red, mark=o] table[x=iterations,y=binf_discr] 
{diez_binf_globale_iter.txt};
\addlegendentry{{\scriptsize $\rho_{discr}$}}
\addplot [draw=black, mark=o] table[x=iterations,y=binf] 
{diez_binf_globale_iter.txt};
\addlegendentry{{\scriptsize  $\rho$}}
\end{axis}
\draw (-0.9,2.0) node[scale=1.,rotate=90]{{\small Error}};
\end{tikzpicture}
\end{minipage}
\begin{minipage}{.49\textwidth}
\begin{tikzpicture}
\begin{semilogyaxis}[
every axis/.append style={font=\scriptsize},
scale=0.8,
xlabel=iterations,
legend style={at={(0,0.05)}, anchor=south west}]
\addplot[draw=black, mark=asterisk] table[x=iterations,y=binf_algebrique] 
{diez_binf_globale_iter.txt};
\addlegendentry{ {\scriptsize $\rho_{alg}$ } }
\addplot [draw=green, mark=triangle*] 
table[x=iterations,y=res] 
{diez_binf_globale_iter.txt};
\addlegendentry{{\scriptsize  $\alpha$}}
\end{semilogyaxis}
\draw (-0.9,2.0) node[scale=1.,rotate=90]{{\small Error}};
\end{tikzpicture}
\end{minipage}
\caption{Pre-cracked structure : Evolution of the upper and lower bounds 
(global, discretization part, algebraic part) and of the residual during the 
iterations}\label{fig:diez_binf_globale_separation}
\end{figure}
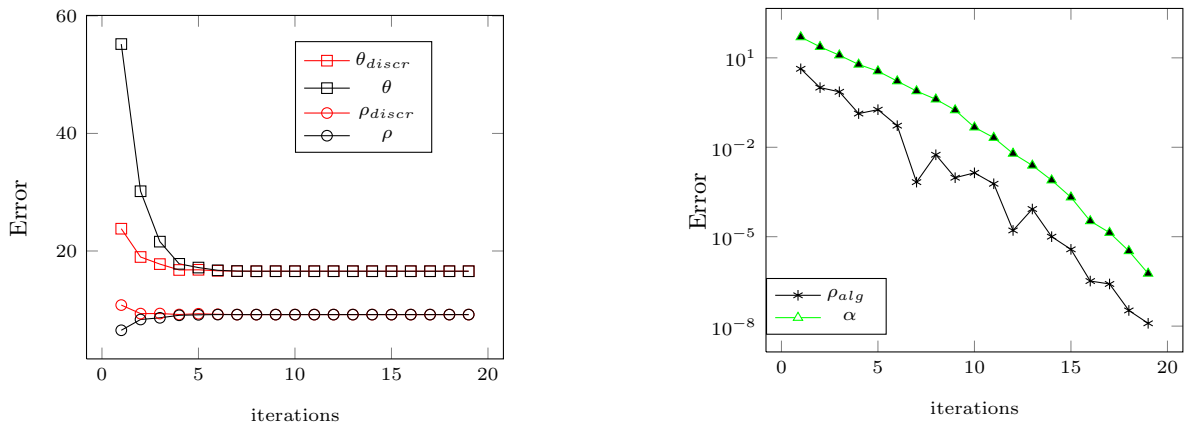

As for the previous example, we observe the fast convergence of the upper and lower bounds. This graph also illustrates 
the separation of sources of error. Finally, we observe that the residual $\alpha$ decreases with the iterations and 
that, again, the evolution of $\rho_{alg}$ is not as regular.

In figure~\ref{fig:diez_binf_globale_separation_zoom}, we give the discretization upper and lower bounds, which define a discretization envelope, for all iterations on the first graph and until the seventh iteration on the second graph.

\begin{figure}[ht]
\begin{minipage}{.49\textwidth}
\begin{tikzpicture}
\begin{semilogyaxis}[
every axis/.append style={font=\scriptsize},
scale=0.8,
xlabel=iterations,
legend style={at={(0,0.05)}, anchor=south west}]
\addplot [draw=green, mark=triangle*] 
table[x=iterations,y=res] 
{diez_binf_globale_iter.txt};
\addlegendentry{{\scriptsize  $\alpha$}}
\addplot [draw=red, mark=square] table[x=iterations,y=bsup_discr] 
{diez_binf_globale_iter.txt};
\addlegendentry{ {\scriptsize $\theta_{discr}$ } }
\addplot [draw=red, mark=o] table[x=iterations,y=binf_discr] 
{diez_binf_globale_iter.txt};
\addlegendentry{{\scriptsize $\rho_{discr}$}}
\end{semilogyaxis}
\draw (-0.9,2.1) node[scale=1.,rotate=90]{Error};
\end{tikzpicture}
\end{minipage}
\begin{minipage}{.49\textwidth}
\begin{tikzpicture}
\begin{semilogyaxis}[
every axis/.append style={font=\scriptsize},
scale=0.8,
xlabel=iterations,
legend style={at={(0.3,0.05)}, anchor=south west}]
\addplot [draw=green, mark=triangle*] 
table[x=iterations,y=res] 
{diez_binf_globale_iter_coupe.txt};
\addlegendentry{{\scriptsize  $\alpha$}}
\addplot [draw=red, mark=square] table[x=iterations,y=bsup_discr] 
{diez_binf_globale_iter_coupe.txt};
\addlegendentry{ {\scriptsize $\theta_{discr}$ } }
\addplot [draw=red, mark=o] table[x=iterations,y=binf_discr] 
{diez_binf_globale_iter_coupe.txt};
\addlegendentry{{\scriptsize $\rho_{discr}$}}
\end{semilogyaxis}
\draw (-0.5,1.9) node[scale=1.,rotate=90]{Error};
\end{tikzpicture}
\end{minipage}
\caption{Pre-cracked structure : Discretization envelope and residual against 
iterations}\label{fig:diez_binf_globale_separation_zoom}
\end{figure}

The discretization envelope can be used to define the stopping criterion of 
the solver. In \cite{Rey14}, the proposed criterion was  to stop when the 
algebraic error 
was ten times smaller than the discretization error. On this example, the solver would stop at the sixth iteration. 
A new stopping criterion 
could be to stop when the algebraic error is smaller than the 
discretization part of the lower bound, which would lead to stop to at the fourth iteration.

\subsection{Goal-oriented error estimation}

In this subsection, we propose an auto-adaptive strategy to steer the iterative solver by an objective of 
precision on a quantity of interest. In this example, the objective of precision will be five percent. 
We used the FETI algorithm (dual approach) to solve the interface problem and the 
statically admissible stress fields are built using the Flux-free technique 
\cite{Par06} with h-refinement technique for the {solving} of local 
problems on star-patches (each element is divided into 4 elements). Since the forward and adjoint problems are 
auto-adjoint (due to the symetry of the bilinear form), we solve the two problems simultaneously using a block 
conjugate gradient (see \cite{Rey15} for more details). The separation of sources of error in the lower bounds 
of global error on forward and adjoint problem enables the definition of the following criterion : \textbf{STOP when 
$\alpha < \rho_{discr}$ and $\widetilde{\alpha} < \widetilde{\rho}_{discr}$} which expresses the fact that the 
residual is out of the discretization enveloppe so that the algebraic error is 
negligible in comparaison with the discretization error. Using equation \eqref{eq:id_parallelogramme_appli}, we have 
the follwing upper and lower bounds on the unknown exact value of the quantity of interest $I_{ex}$: 
\begin{equation}\label{eq:id_parallelogramme_appli_2}
I_{ex}^-=I_H+\frac{1}{4} \beta^+_{inf} -\frac{1}{4}  \beta^-_{sup}   \leq 
I_{ex}  \leq I_H+\frac{1}{4} \beta^+_{sup} -\frac{1}{4}  
\beta^-_{inf} =I_{ex}^+
\end{equation}
We will compare the bounds $I_{ex}^-$ and $I_{ex}^+$ in case the quantities $\beta^+_{inf}$ and $\beta^-_{inf}$ are 
computed using expressions in Corollary~\ref{prop:S+} and in case they are chosen equal to zero 
($\beta^+_{inf}=\beta^-_{inf}=0$), which is equivalent to the bounding in equation \eqref{eq:majoration_IHH2} with 
$\dep_H=\dep_D$, $\depadj_{\widetilde{H}}=\depadj_D$, $\hat{\sig}_H=\hat{\sig}_N$ and 
$\hat{\sigadj}_{\widetilde{H}}=\hat{\sigadj}_N$.

We start with a first mesh which is a little bit refined near the quantity of interest in order to have several 
elements in the region $\omega$. The discretization error is computed at iteration 1 
(which is more relevant than the initilization Iteration 0). The criterion is defined and the solver iterates until 
the criterion is reached. The error is estimated once again to verify that the discretization error has not changed 
too much. We give in table~\ref{tab:diez_res1} the evolution of the residual for forward and 
adjoint problems and the bounds on the global errors for the two problems.

\begin{table}[ht]\centering%
\begin{tabular}{|c|c|c|c|c|c|c|}

\hline 
 Iteration &$\theta_{discr}$  & $\rho_{discr}$ & $\widetilde{\theta}_{discr}$ & $\widetilde{\rho}_{discr}$  & $\alpha  
$&$\widetilde{\alpha}$  \\ 
\hline 
0 & & & & & 260.07 & 0.26041\\
\hline 
1 & 11.45 & 7.0008& 0.12867& 1.5457 $10^{-3}$& 48.339 & 0.14931\\
\hline 
2 & & & & & 35.991 & 8.5664 $10^{-2}$\\
\hline 
3 & & & & &16.649& 4.8205 $10^{-2}$\\
\hline 
4 & & & & & 5.5966 &1.2462 $10^{-2}$\\
\hline 
5 & & & & & 3.5765 & 8.4996 $10^{-3}$\\
\hline 
6 & 9.9004 & 6.7105 & 0.12682 & 1.5893 $10^{-3}$ & 0.94805& 1.5156 $10^{-3}$\\
\hline 
\end{tabular} 
\caption{Pre-cracked structure: First {mesh}}\label{tab:diez_res1}
\end{table}

Regarding the quantity of interest, at the sixth iteration, we obtain the values presented in 
table~\ref{tab:diez_res1_donnes}.

\begin{table}[ht]\centering%
\begin{tabular}{|c|c|c|c|c|}

\hline 
 $I_H$ &$\beta^-_{inf} $ & $\beta^+_{inf}$ & $\frac{1}{4}\beta^+_{sup}$ & $\frac{1}{4}\beta^-_{sup}$   \\ 
\hline 
3.1505 & 1.0087&1.2129 & 0.68424&0.57132 \\

\hline 
\end{tabular} 
\caption{Pre-cracked structure: First {mesh} : error estimation}\label{tab:diez_res1_donnes}
\end{table}

In table~\ref{tab:diez_res1_parallelogramme}, we give the upper and lower bounds of the unknown exact value $I_{ex}$ 
of the quantity of interest with and without the use of the lower bounds. We observe that the use of the lower bounds 
enables to reduce the width by 44 \%. At the end of the first {solving}, the 
error on the quantity of interest is 22.224 \%. 
\begin{table}[ht]\centering%
\begin{tabular}{|c||c|c|c|c|}
\hline 
    & {\footnotesize $I_{ex}^+$}&  {\footnotesize  $I_{ex}^-$} & {\footnotesize width }& 
 {\footnotesize precision  } \\ 
\hline 
 {\footnotesize With $\beta_{inf}^+=\beta_{inf}^-=0$} & 3.8347 &2.5791& 1.2555 & 39.852 \% \\
\hline 
{ \footnotesize With $\beta_{inf}^+$ and $\beta_{inf}^-$ from table~\ref{tab:diez_res1_donnes}}   &  3.5825 &2.8824 & 
0.7001 & 22.224 \%  \\
\hline 
\end{tabular} 
\caption{Pre-cracked structure: First {mesh} : bounds for the exact 
quantity of interest}\label{tab:diez_res1_parallelogramme}
\end{table}

We also give, for both problems, the contribution from each subdomain thanks to the following quantities : 
\begin{equation}
 \eta\s=\frac{\ecr{\dep_N\s,\hat{\sig}_N\s}{\Omega\s}}{\sqrt{\sum_s\ecrc{\dep_N\s,\hat{\sig}_N\s}{\Omega\s}}} \qquad 
\text{and}\qquad
\widetilde{\eta}\s=\frac{\ecr{\depadj_N\s,\hat{\sigadj}_N\s}{\Omega\s}}{\sqrt{\sum_s\ecrc{\depadj_N\s,\hat{\sigadj}_N\s
}{\Omega\s}}}
\end{equation}

and plot the contributions on the Figure~\ref{Fig:Diez_resolution1_hist}.
\begin{figure}\centering
\begin{tikzpicture}[scale=.8]
\begin{axis} [width= 0.8\textwidth, ybar,bar width=5pt,xtick={1,2,3,4,5,6,7, 8, 9, 10, 11, 12, 13, 14, 15, 16},legend 
style={at={(0,1)},anchor=north west}]
  \addplot [fill=blue,draw=blue] table[x=sd,y=e_ref_rel]{diez_res1.txt};
  \addplot table[x=sd,y=e_adj_rel]{diez_res1.txt};
\legend{$\eta\s$, $\widetilde{\eta}\s$}
\node at (axis cs:2,62.8) {{\small $\theta_{discr}=9.9004$}};
\node at (axis cs:2.1,52.7) {{\small $\widetilde{\theta}_{discr}=0.12682$}};
\end{axis}
\end{tikzpicture}
\caption{First {mesh} : Distribution of the error estimator within 
subdomains}\label{Fig:Diez_resolution1_hist}
\end{figure}
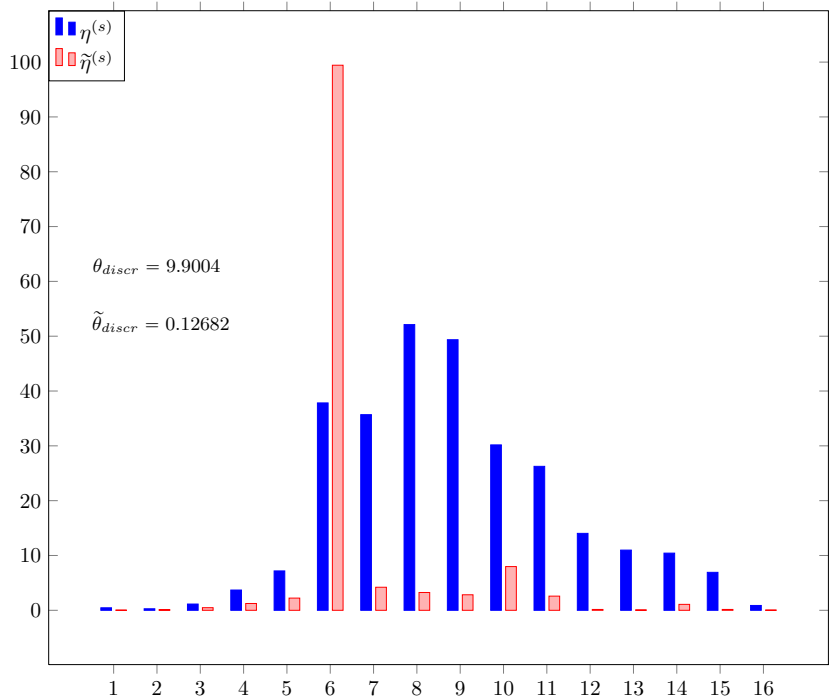

As expected, for the adjoint problem, the error is mainly located in the sixth subdomain, which is the subdomain with 
the load. For the forward problem, the error is more diffuse. 

In order to reach the objective of precision, we decide to first improve the quality of the solution of the forward 
problem. To do so, since the error is not located in few subdomains, we decide to refine the mesh on the whole 
structure. Of course, we could have used the error map provided by the error estimator and a remeshing criterion to 
process adaptive remeshing (see for instance \cite{Ver96,Die99,Ode01, Bel05, Die07}). For sake of simplicity, a 
refinement by splitting is performed. 
We also reuse the search directions computed during the first {solving} in order to speed up the next 
{one}. The 
12 interface vectors (2 interface vectors -one for the forward problem, one for the adjoint problem- 
computed at each iteration of the first {solving} that converged in 6 iterations) corresponding to the 
search 
directions are projected on the new mesh and used as additional constraints thanks to augmented-Krylov methods 
\cite{Saad97}. Since the dual approach was chosen, we use a two-level FETI algorithm \cite{Far98b} to take into account 
the additional constraints. It does not modify the methodology to construct admissible fields nor the error estimator. 
Recycling search directions leading to a better initilization, we compute the error estimation at iteration 0 
and define the stopping criterion. Once the criterion is reached, the error is estimated once again and if the 
criterion is checked, the solver is stopped. We give in the table~\ref{tab:diez_res2} the 
evolution of the residuals and of the bounds on the global errors for the forward and adjoint problems. We can observe 
that the first residual is comparable to the last residual of the first {solving}.

\begin{table}[ht]\centering%
\begin{tabular}{|c|c|c|c|c|c|c|}

\hline 
 Iteration &$\theta_{discr} $ & $\rho_{discr}$ &$ \widetilde{\theta}_{discr}$ & $\widetilde{\rho}_{discr}$  &$ \alpha  
$&$\widetilde{\alpha}$  \\ 
\hline 
0 & 5.7611& 3.8882& 7.5633 $10^{-2}$&7.551 $10^{-4}$& 5.7017 & 7.4954 $10^{-3}$\\
\hline 
1 &   &  & &   & 3.3265 & 4.4442 $10^{-3}$\\
\hline 
2 & & & & & 1.2869 & 8.4996 $10^{-4}$\\
\hline 
3 & 5.4304& 3.7882& 7.5564 $10^{-2}$ &7.6332 $10^{-4}$ &0.31197& 2.4632 $10^{-4}$\\
\hline 

\end{tabular} 
\caption{Pre-cracked structure: Second {mesh}}\label{tab:diez_res2}
\end{table}

Regarding the quantity of interest, we obtain the values presented in table~\ref{tab:diez_res2_donnes}.

\begin{table}[ht]\centering%
\begin{tabular}{|c|c|c|c|c|}

\hline 
 $I_H$ &$\beta^-_{inf} $ & $\beta^+_{inf}$ & $\frac{1}{4}\beta^+_{sup}$ & $\frac{1}{4}\beta^-_{sup}$   \\ 
\hline 
3.2266 & 0.32673&0.43838& 0.23661&0.17372  \\

\hline 
\end{tabular} 
\caption{Pre-cracked structure: Second {mesh} : error estimation}\label{tab:diez_res2_donnes}
\end{table}

In table~\ref{tab:diez_res2_parallelogramme}, we give the upper and lower bounds of the unknown exact value $I_{ex}$ 
of the quantity of interest with and without the use of the lower bounds. We observe that the use of the lower bounds 
improves the bounding. At the end of the second {solving}, the uncertainty on the quantity of interest 
is 6.7893 \%. 

\begin{table}[ht]\centering%
\begin{tabular}{|c||c|c|c|c|c|}
\hline 
    & {\footnotesize  $I_{ex}^+$}&  {\footnotesize   $I_{ex}^-$} & {\footnotesize width }& 
 {\footnotesize precision  } \\ 
 \hline 
 {\footnotesize With $\beta_{inf}^+=\beta_{inf}^-=0$} & 3.4632 &3.0528& 0.41034 & 12.717 \% \\
\hline 
{ \footnotesize With $\beta_{inf}^+$ and $\beta_{inf}^-$ from table~\ref{tab:diez_res2_donnes}}   &  3.3815 &3.1624 & 
0.21906 & 6.7893 \%  \\
\hline 
\end{tabular} 
\caption{Pre-cracked structure: Second {mesh} : bounds for the exact 
quantity of interest}\label{tab:diez_res2_parallelogramme}
\end{table}


In order to reach the objective of precision, we decide to refine the mesh only for the sixth subdomain to improve the 
quality of the adjoint solution. Refining this subdomain's discretization introduces incompatibilities at the 
interface that can be easily managed thanks to transfer matrix as explained in \cite{Rey15}. This incompatibility does 
not affect the error estimator since the quantity of interest is located far from the subdomain's boundary. Once 
again, we reuse the search directions of the first two {solutions} to speed up the third 
{solving}. As the 
discretization of 
the interface is not modified (see \cite{Rey15}), the interface vectors can be directly used as additional 
constraints.

\begin{table}[ht]\centering%
\begin{tabular}{|c|c|c|c|c|c|c|}

\hline 
 Iteration &$\theta_{discr}$  & $\rho_{discr}$ & $\widetilde{\theta}_{discr}$ & $\widetilde{\rho}_{discr}$  & $\alpha  
$&$\widetilde{\alpha}$  \\ 
\hline 
0 & 5.1501  & 3.5841& 4.6197  $10^{-2}$ & 2.9485 $10^{-4}$& 0.32764 &  6.2878 $10^{-4}$\\
\hline 
1 & 5.1267 & 3.5824 &4.619  $10^{-2}$&2.9482 $10^{-4}$&  0.10619& 1.6588 $10^{-4}$\\ \hline
\end{tabular} 
\caption{Pre-cracked structure: Third {mesh}}\label{tab:diez_res3}
\end{table}

Regarding the quantity of interest, we obtain the values presented in table~\ref{tab:diez_res3_donnes}. 

\begin{table}[ht]\centering%
\begin{tabular}{|c|c|c|c|c|}

\hline 
 $I_H$ &$\beta^-_{inf} $ & $\beta^+_{inf}$ & $\frac{1}{4}\beta^+_{sup}$ & $\frac{1}{4}\beta^-_{sup}$   \\ 
\hline 
3.2625 & 0.18986&0.25873& 0.13618&0.10058   \\

\hline 
\end{tabular} 
\caption{Pre-cracked structure: Third {mesh} : error estimation}\label{tab:diez_res3_donnes}
\end{table}

In table~\ref{tab:diez_res3_parallelogramme}, we give the upper and lower bounds of the unknown exact value $I_{ex}$ 
of the quantity of interest with and without the use of the lower bounds. We observe that the use of the lower bounds 
enables to reduce the width by 44 \%. At the end of the third {solving}, the objective of precision is 
reached and 
the error on the exact value of the quantity of interest is smaller than 4 \%.
\begin{table}[ht]\centering%
\begin{tabular}{|c||c|c|c|c|}
\hline 
    & {\footnotesize $I_{ex}^+$}&  {\footnotesize $I_{ex}^-$} & {\footnotesize width }& 
 {\footnotesize precision  } \\ 
\hline 
 {\footnotesize With $\beta_{inf}^+=\beta_{inf}^-=0$} & 3.3986 &3.1619& 0.2367 & 7.2574 \% \\
\hline 
{ \footnotesize With $\beta_{inf}^+$ and $\beta_{inf}^-$ from table \ref{tab:diez_res3_donnes}}   &  3.3512 &3.2265 & 
0.1247 & 3.8199\%  \\
\hline 
\end{tabular} 
\caption{Pre-cracked structure: Third {mesh} : bounds for the exact 
quantity of interest}\label{tab:diez_res3_parallelogramme}
\end{table}

Finally, we give the global errors and residuals against cumulative iteration on Figure~\ref{fig:Diez_global} and the 
evolution of the approximated value of the quantity of interest $I_H$ and the upper 
and 
lower bounds for $I_{ex}$ in Figure~\ref{fig:IH_evolution}.

\begin{figure}[ht]\centering
\begin{tikzpicture}
\begin{semilogyaxis}[
every axis/.append style={font=\scriptsize},
scale=1.5,legend style={at={(1.1,0.05)}, anchor=south west},
enlarge x limits=false, extra x ticks={6.5, 10.5},
extra x tick labels={remeshing, remeshing},
extra x tick style={grid=major,
tick label style={rotate=90,anchor=east}},
legend entries={{\scriptsize  $\alpha$}, {\scriptsize  $\widetilde{\alpha}$}, {\scriptsize $\theta_{discr}$ }, 
{\scriptsize $\rho_{discr}$ }, {\scriptsize $\widetilde{\theta}_{discr}$}, {\scriptsize $\widetilde{\rho}_{discr}$}}
]
\addplot [draw=green, mark=triangle*,forget plot] 
table[x=iter_cumul,y=res] 
{diez_resol1.txt };
\addplot [draw=green, mark=triangle*,forget plot] 
table[x=iter_cumul,y=res] 
{diez_resol2.txt};
\addplot [draw=green, mark=triangle*] 
table[x=iter_cumul,y=res] 
{diez_resol3.txt};

\addplot [draw=blue, mark=triangle*,forget plot] 
table[x=iter_cumul,y=res_adj] 
{diez_resol1.txt};
\addplot [draw=blue, mark=triangle*,forget plot] 
table[x=iter_cumul,y=res_adj] 
{diez_resol2.txt};
\addplot [draw=blue, mark=triangle*] 
table[x=iter_cumul,y=res_adj] 
{diez_resol3.txt};

\addplot [draw=red, mark=square] table[x=iter_cumul,y=theta,forget plot] 
{diez_resol1.txt};
\addplot [draw=red, mark=square] table[x=iter_cumul,y=theta,forget plot] 
{diez_resol2.txt};
\addplot [draw=red, mark=square] table[x=iter_cumul,y=theta] 
{diez_resol3.txt};

\addplot [draw=red, mark=o] table[x=iter_cumul,y=rho,forget plot] 
{diez_resol1.txt};

\addplot [draw=red, mark=o] table[x=iter_cumul,y=rho,forget plot] 
{diez_resol2.txt};

\addplot [draw=red, mark=o] table[x=iter_cumul,y=rho] 
{diez_resol3.txt};

\addplot [draw=black, mark=square] table[x=iter_cumul,y=theta_adj,forget plot] 
{diez_resol1.txt};
\addplot [draw=black, mark=square] table[x=iter_cumul,y=theta_adj,forget plot] 
{diez_resol2.txt};
\addplot [draw=black, mark=square] table[x=iter_cumul,y=theta_adj] 
{diez_resol3.txt};

\addplot [draw=black, mark=o] table[x=iter_cumul,y=rho_adj,forget plot] 
{diez_resol1.txt};
\addplot [draw=black, mark=o] table[x=iter_cumul,y=rho_adj,forget plot] 
{diez_resol2.txt};
\addplot [draw=black, mark=o] table[x=iter_cumul,y=rho_adj] 
{diez_resol3.txt};

\end{semilogyaxis}
\draw (-0.99,3.9) node[scale=1.,rotate=90]{Error and residual };
\draw (1,-0.5) node[scale=1.]{Cumulative iterations};
\end{tikzpicture}
\caption{Pre-cracked structure : Discretization envelope and residual against 
cumulative iterations}\label{fig:Diez_global}
\end{figure}

\begin{figure}[ht]\centering
\begin{tikzpicture}
\begin{axis}[
every axis/.append style={font=\scriptsize},
scale=1,
xlabel={meshes},
xtick={1, 2, 3},
legend style={at={(1.1,0.05)}, anchor=south west}]
\addplot [draw=violet,mark=pentagon*, only marks] table[x=resol,y=IH] 
{diez_global_IH.txt};
\addlegendentry{ {\scriptsize $I_H$ } }
\addplot [draw=violet,mark=pentagon, dashed] table[x=resol,y=Iex_bsup] 
{diez_global_IH.txt};
\addlegendentry{{\scriptsize Upper bound of $I_{ex}$}}
\addplot [draw=violet, mark=pentagon, dashed] table[x=resol,y=Iex_binf] 
{diez_global_IH.txt};
\addlegendentry{{\scriptsize Lower bound of $I_{ex}$}}



\end{axis} and residual
\draw (-0.9,2.9) node[scale=1.,rotate=90]{{\scriptsize Error on the quantity of interest}};
\end{tikzpicture}
\caption{Pre-cracked structure : Evolution of the approximated quantity of interest $I_H$ and of the 
bounds of the exact value $I_{ex}$}\label{fig:IH_evolution}
\end{figure}
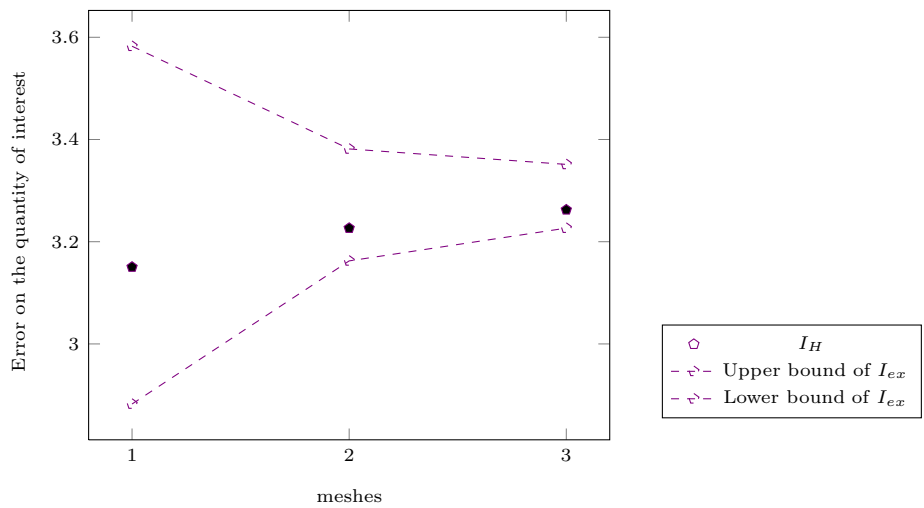

\vspace{-6pt}
\section{Conclusion}\label{sec:ccl}
\vspace{-2pt}
In this paper, we proposed a strict lower bound of the error in a substructured 
context which can be computed in parallel using the admissible fields  
built for the computation of the upper bound. Moreover, a theorem gives a 
second lower bound that separates the algebraic error from the discretization 
error. As illustrated on mechanical examples, the lower bounds are quasi independent 
from the substructuring and are as accurate as the sequential lower bound. The 
examples also show the separation of sources of error for the lower bound.
Finally, we proposed an auto-adaptive strategy to steer the iterative solver by an objective of 
precision on a quantity of interest.  Benefiting from the separation of sources of error to avoid oversolving and the 
recycling of Krylov subspaces, the strategy automatically defines a sequence of optimized {solvings}.

\vspace{-6pt}

\subsection*{Acknoledgment}
The authour would like to thank Professor Pedro Diez (Universitat Polit\`{e}cnica de Catalunya) for the helpful and fruitful discussions.


\bibliography{Biblio}
\end{document}